\documentclass[10pt]{amsart}
\usepackage{amsmath}
\usepackage{epsfig}
\usepackage{graphicx}
\usepackage{eufrak}
\usepackage{dsfont}
\usepackage[english]{babel}
\usepackage{color}

\usepackage{palatino, mathpazo}
\usepackage{amscd}      
\usepackage{amssymb}
\usepackage{dsfont}
\usepackage{xypic}      
\LaTeXdiagrams          
\usepackage[all,v2]{xy}
\xyoption{2cell} \UseAllTwocells \xyoption{frame} \CompileMatrices
\usepackage{latexsym}
\usepackage{epsfig}
\usepackage{enumerate}

\usepackage{latexsym}
\usepackage{epsfig}
\usepackage{amsfonts}
\usepackage{enumerate}
\usepackage{mathrsfs}

\newbox\mybox
\def\overtag#1#2#3{\setbox\mybox\hbox{$#1$}\hbox to
  0pt{\vbox to 0pt{\vglue-#3\vglue-\ht\mybox\hbox to \wd\mybox
      {\hss$\ss#2$\hss}\vss}\hss}\box\mybox}
\def\undertag#1#2#3{\setbox\mybox\hbox{$#1$}\hbox to 0pt{\vbox to
    0pt{\vglue#3\vglue\ht\mybox\hbox to \wd\mybox
      {\hss$\ss#2$\hss}\vss}\hss}\box\mybox}
\def\lefttag#1#2#3{\hbox to 0pt{\vbox to 0pt{\vss\hbox to
      0pt{\hss$\ss#2$\hskip#3}\vss}}#1}
\def\righttag#1#2#3{\hbox to 0pt{\vbox to 0pt{\vss\hbox to
      0pt{\hskip#3$\ss#2$\hss}\vss}}#1}
\let\ss\scriptstyle

\def\Dot{\lower.2pc\hbox to 2pt{\hss$\bullet$\hss}}
\def\Circ{\lower.2pc\hbox to 2pt{\hss$\circ$\hss}}
\def\Vdots{\raise5pt\hbox{$\vdots$}}
\newcommand\lineto{\ar@{-}}
\newcommand\dashto{\ar@{--}}
\newcommand\dotto{\ar@{.}}

\allowdisplaybreaks[3]

\newtheorem{prop}{Proposition}[section]

\newtheorem{cor}[prop]{Corollary}
\newtheorem{thm}[prop]{Theorem}


\newtheorem{defn}[prop]{Definition}

            {\nolinebreak $\Box$ \end{trivlist}}

\newcommand{\noprint}[1]{}

\renewcommand{\tilde}{\widetilde}

\newcommand{\sm}{\mbox{\tiny sm}}

\newcommand{\Hom}{\mbox{Hom}}

\newcommand{\XX}{{\mathfrak X}}

\newcommand{\YY}{{\mathfrak Y}}

\newcommand{\MM}{{\mathfrak M}}

\newcommand{\zz}{{\mathbb Z}}

\newcommand{\qq}{{\mathbb Q}}
\newcommand{\pp}{{\mathbb P}}
\newcommand{\cc}{{\mathbb C}}
\newcommand{\rr}{{\mathbb R}}

\newcommand{\Gm}{{{\mathbb G}_{\mbox{\tiny\rm m}}}}

\newcommand{\sT}{{\mathcal T}}

\newcommand{\sO}{{\mathcal O}}

\newcommand{\sX}{{\mathcal X}}

\DeclareMathOperator{\id}{id}

\DeclareMathOperator{\Aut}{Aut}

\DeclareMathOperator{\lci}{lci}

\DeclareMathOperator{\rad}{rad}
\DeclareMathOperator{\perm}{perm}

\newcommand{\rk}{\mathop{\rm rk}}

\newcommand{\tr}{\mathop{\rm tr}\nolimits}

\renewcommand{\Im}{\mathop{\rm Im}}

\newcommand{\sign}{\mathop{\rm sign}}

\newcommand{\cok}{\mathop{\rm cok}}

\newcommand{\spec}{\mathop{\rm Spec}\nolimits}

\newcommand{\tor}{\mathop{\rm tor}\nolimits}

\numberwithin{equation}{subsection}

\hyphenpenalty=6000 \tolerance=10000

%
\newcommand {\mat}      [1] {\left(\begin{array}{#1}}
\newcommand {\rix}          {\end{array}\right)}



\title[Smoothing of surface singularities]{Smoothing of  surface singularities via equivariant smoothing of lci covers}
\author{Yunfeng Jiang}
\address{Department of Mathematics\\ University of Kansas\\ 405 Snow Hall 1460 Jayhawk Blvd\\Lawrence KS 66045 USA} 
\email{y.jiang@ku.edu}


\begin{document}
\sloppy \maketitle
\begin{abstract}
We  provide some  results  of the smoothing of surface singularities by Looijenga-Wahl and study smoothing of isolated surface singularities induced by equivariant smoothing of locally complete intersection ($\lci$) singularities.  We classify the situation where the smoothing of  a simple elliptic singularity, a cusp singularity  or its cyclic quotient is induced by the equivariant smoothing of the $\lci$ covers. 
\end{abstract}

\maketitle


\section{Introduction}

\subsection{Results of Looijenga-Wahl}
Let $(X,0)$ be a germ of an  isolated  surface singularity.  A  smoothing of $(X,0)$ is a flat analytic morphism $f: (\XX,0)\to (\Delta,0)$, where $(\Delta, 0)$ is an open disc such that the special fiber 
$(f^{-1}(0),0)\cong (X,0)$ and the generic fiber $f^{-1}(t)$ is smooth for $t\neq 0$. The study of smoothing of surface singularities has a long history, see \cite{Pinkham}, \cite{Wahl3}, \cite{Wahl}, \cite{Looijenga-Wahl}, \cite{Kollar-Shepherd-Barron}.  We can not list all the references for the surface smoothing and we suggest the readers for the references in the above papers. The smoothing of isolated surface singularities has interesting discoveries on the topology and geometry of the singularity.  Some special type of isolated surface singularities (semi-log-canonical singularities) and their smoothing have important applications in the KSBA compactification of moduli space of general type surfaces \cite{Kollar-Shepherd-Barron}. 

We briefly recall the topological invariants for a smoothing  $f: (\XX,0)\to (\Delta,0)$.  We assume that $\XX$ is a Stein space with a partial boundary such that 
$f|_{\XX\setminus \{0\}}: \XX\setminus \{0\}\to \Delta\setminus \{0\}$ is the Milnor fibration. 
The generic fiber $M:=f^{-1}(t)$ is a real 4-manifold with boundary which is called the Milnor fibre of the smoothing.  The space $M$ has the homotopy type of a CW complex of (real) dimension $\le 2$, and 
its boundary $\partial M$ is diffeomorphic to the link $L_X$ of the surface singularity $(X,0)$.  
The link $L_X$ is a smooth 3-dimensional oriented manifold.  Also if $\pi: \widetilde{X}\to X$ is the minimal resolution of $X$, then we can regard the link $L_X$ as the boundary of both 
$M$ and $\widetilde{X}$.

The homology groups $H_i(M), H_i(L_X), H_i(M, L_X)$ are the important topological invariants for the smoothing $f$ which fit into the following exact sequences:
\begin{equation}\label{eqn_eact_sequence}
\cdots\rightarrow H_2(M)\rightarrow H_2(M, L_X)\stackrel{\partial}{\rightarrow} H_1(L_X)\rightarrow H_1(M)\rightarrow H_1(M, L_X)\rightarrow\cdots
\end{equation}

The second homology group $H_2(M)$ is equipped with the intersection product which is symmetric. 
The group $H_2(\widetilde{X})$ is free and has a natural set  of fundamental classes of the irreducible exceptional curves as generators, and the intersection pairing is negative-definite. 
For  $H_2(M)$,  we have the Sylvester invariants $(\mu_0,\mu_+, \mu_-)$ of the real corresponding pairing in \cite{Steenbrink}, where 
$\mu_0+\mu_{+}+\mu_{-}=\rk H_2(M)$.
Looijenga-Wahl \cite{Looijenga-Wahl} constructed natural and compatible {\em quadratic functions} on $H_2(M)$, $H_2(\widetilde{X})$, and on the torsion subgroup $H_2(L_X)_{\tor}$ of $H_2(L_X)$.
The notations are denoted by $Q_M, Q_{\widetilde{X}}$ and $q$ respectively. 
A detailed review is in \S \ref{sec_quadratic_function}.

We have the following result in \cite{Looijenga-Wahl}:

\begin{thm}\label{thm_Looijenga-Wahl_intro}(\cite[Theorem 4.5]{Looijenga-Wahl})
Suppose that $M$ is the Milnor fiber of a smoothing of an isolated surface singularity $(X,0)$ with link $L_X$. Then the quadratic function $Q_M$ on $H_2(M)$ makes it an ordinary quadratic lattice 
with the associated non-degenerate lattice $(\overline{H}_2(M), Q_M)$.  The corresponding Discriminant Quadratic Form (DQF) is canonically isomorphic to 
$(I^{\perp}/I, q_I)$ where $I\subset H_1(L_X)_{\tor}$ is the $q$-isotropic subgroup $I:=\Im(\partial_{\tor}: H_2(M, L_X)_{\tor}\to H_1(L_X)_{\tor})$.  

Moreover, there exists an exact sequence 
$$0\to H_1(L_X)_{\tor}/I^{\perp}\rightarrow H_1(M)\rightarrow P\to 0$$
where $P$ is a quotient of $H_1(L_X)/H_1(L_X)_{\tor}\cong H_1(\widetilde{X})$. 
\end{thm}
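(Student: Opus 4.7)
The plan is to extract both assertions from the long exact sequence~(\ref{eqn_eact_sequence}) combined with Poincar\'e--Lefschetz duality $H_k(M,L_X)\cong H^{4-k}(M)$. Because $M$ is homotopy equivalent to a CW complex of real dimension $\leq 2$, $H^3(M)=H^4(M)=0$ and consequently $H_1(M,L_X)=0$, so (\ref{eqn_eact_sequence}) truncates on the right, giving $H_1(M)\cong H_1(L_X)/\Im(\partial)$. Duality and universal coefficients also produce a perfect pairing $(H_2(M)/\tor)\otimes(H_2(M,L_X)/\tor)\to\zz$ which coincides with the composition of the natural map $\alpha:H_2(M)\to H_2(M,L_X)$ with the adjoint of the intersection form. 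From this one reads that the radical of the intersection form on $H_2(M)$ is the saturation of $\Im(H_2(L_X)\to H_2(M))$, so the non-degenerate quotient $\overline{H}_2(M)$ is free; the quadratic refinement $Q_M$ of \S\ref{sec_quadratic_function} descends and yields the ordinary quadratic lattice structure in the statement.

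The discriminant computation proceeds by identifying $H_2(M,L_X)/\tor$ with $\overline{H}_2(M)^*$ via Poincar\'e--Lefschetz duality. Under this identification $\alpha$ becomes the canonical embedding $\overline{H}_2(M)\hookrightarrow\overline{H}_2(M)^*$ defined by the bilinear form, with image $\ker(\partial)\cap(H_2(M,L_X)/\tor)$, so the discriminant group $\overline{H}_2(M)^*/\overline{H}_2(M)$ is the torsion subgroup of $H_2(M,L_X)/\Im(\alpha)$, which via the exact sequence embeds into $H_1(L_X)_{\tor}$. The image is pinned down by two compatibilities: (i) $I=\Im(\partial_{\tor})$ is $q$-isotropic, since for $x,y\in H_2(M,L_X)_{\tor}$ the linking $q(\partial x,\partial y)$ is computed as a relative intersection of $x$ and $y$ in $M$, which vanishes modulo $\zz$ because both classes are torsion; (ii) $\Im(\partial)\cap H_1(L_X)_{\tor}=I^\perp$, which follows by dualizing the long exact sequence using the perfect pairing $q:H_1(L_X)_{\tor}\times H_1(L_X)_{\tor}\to\qq/\zz$, reflecting the standard compatibility of the intersection form of the 4-manifold $M$ with the linking form on its boundary. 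Together these identify $\overline{H}_2(M)^*/\overline{H}_2(M)\cong I^\perp/I$ as abelian groups, and the match $Q_M\equiv q_I\pmod{\zz}$ follows because the Looijenga--Wahl refinements $Q_M$ and $q$ of \S\ref{sec_quadratic_function} are constructed precisely to be compatible under~$\partial$.

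For the second exact sequence, $H_1(M)\cong H_1(L_X)/\Im(\partial)$ together with (ii) yields
$$0\to H_1(L_X)_{\tor}/I^\perp\to H_1(M)\to H_1(L_X)/(H_1(L_X)_{\tor}+\Im(\partial))\to 0,$$
whose right-hand quotient $P$ is manifestly a quotient of $H_1(L_X)/H_1(L_X)_{\tor}$; the latter is canonically isomorphic to $H_1(\widetilde{X})$ by the standard identification of $L_X$ with the boundary of a regular neighborhood of the exceptional divisor in $\widetilde{X}\to X$. The main obstacle is (ii), $\Im(\partial)\cap H_1(L_X)_{\tor}=I^\perp$, together with the matching of the quadratic refinements on the discriminant; the remainder of the argument is essentially formal from the long exact sequence of $(M,L_X)$ once these compatibility statements are established.
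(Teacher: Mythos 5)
First, a point of reference: this paper does not prove the statement itself --- it is quoted verbatim from \cite[Theorem 4.5]{Looijenga-Wahl}, and the only ingredient reproduced here is the bilinear-form version, Proposition \ref{prop_lem_2.4_LW}. Your skeleton is the standard one and matches that proposition where they overlap: the long exact sequence of $(M,L_X)$ with $H_1(M,L_X)\cong H^3(M)=0$ gives $H_1(M)\cong H_1(L_X)/\Im(\partial)$, your identity (ii) $\Im(\partial)\cap H_1(L_X)_{\tor}=I^{\perp}$ is exactly the statement $\Im(\partial_{\tor})^{\perp}=\ker(i_{\tor})$ of Proposition \ref{prop_lem_2.4_LW}, and granting (ii) the ``moreover'' exact sequence is formal, with $H_1(L_X)/H_1(L_X)_{\tor}\cong H_1(\widetilde{X})$ coming from (\ref{eqn_exact_torsion_L}). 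The identification of the discriminant \emph{group} with $I^{\perp}/I$ via Lefschetz duality is likewise sound (modulo a sign: Proposition \ref{prop_lem_2.4_LW} produces $-b_M$, which you never track).

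The genuine gap is in the quadratic, as opposed to bilinear, part of the statement, which is precisely where the content of the theorem lies. Your argument for (i) --- that ``$q(\partial x,\partial y)$ is computed as a relative intersection in $M$'' --- is an argument about the bilinear linking form $l$, and only shows that $I$ is $l$-isotropic; $q$-isotropy is the stronger assertion that the quadratic function itself vanishes on $I$, and this does not follow from bilinear considerations. More seriously, the closing claim that ``$Q_M$ and $q$ are constructed precisely to be compatible under $\partial$'' is false as stated: in this paper (and in Looijenga--Wahl) $q$ on $H_1(L_X)_{\tor}$ is defined from $Q_{\widetilde{X}}(D)=\frac12(D\cdot D+K\cdot D)$ on the \emph{resolution} $\widetilde{X}$, not from the Milnor fiber, so its compatibility with $Q_M(x)=\frac12(x\cdot x+\langle K_M,x\rangle)$ is a theorem, not a definition. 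The proof requires the observation (\S\ref{subsec_surface_smoothing}) that the diffeomorphism $\partial M\to L_X$ respects the homotopy classes of complex structures on $T_{\partial M}\oplus\rr$ and $T_{L_X}\oplus\rr$, hence identifies the (torsion) restrictions of $K_M$ and $K_{\widetilde{X}}$; without this input you cannot match the two quadratic refinements on $I^{\perp}/I$, nor even verify that $Q_M$ is ordinary (vanishing of $\langle K_M,x\rangle$ on the radical, which you also leave unaddressed, needs the same torsion statement for $c_1(T_M|_{\partial M})$). As written, your proposal proves the bilinear shadow of the theorem and the ``moreover'' sequence, but assumes the quadratic compatibility that is the actual point of \cite[Theorem 4.5]{Looijenga-Wahl}.
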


\subsection{Smoothing component}\label{subsec_smoothing_component_intro}

Recall that a smoothing component of $(X,0)$ is by definition an irreducible component of the base space $\MM$ of a versal deformation of $(X,0)$ over which the generic fiber is smooth. 
We let 
$f: \XX\to \MM$
to represent the versal deformation space of $(X,0)$ and 
$$f: \XX\to \MM^{\sm}$$
the base space of all the smoothing components.  Let $D\subset \MM^{\sm}$ be the discriminant locus such that there is a fibration
$$f^{-1}(\MM^{\sm}\setminus D)\to \MM^{\sm}\setminus D.$$
Each connected component of $\MM^{\sm}\setminus D$ is of the form $\MM^{1}\setminus (\MM^{1}\cap D)$ for 
$\MM^{1}$ a smoothing component.   For any $s\in \MM^{1}$, we have 
$$H_2(\XX_s, \partial \XX_s)\stackrel{\partial_s}{\rightarrow} H_1(\partial \XX_s)\cong H_1(\partial X)\cong H_1(L_X)$$
and the image of its torsion part $H_2(\XX_s, \partial \XX_s)_{\tor}$ only depends on the connected component of $s\in \MM^{\sm}\setminus D$. Thus,
each smoothing component 
$S\subset \MM^{\sm}$ of $(X,0)$ determines a subgroup
$J(S)\subset H_1(L_X)$, $J(S)\cap H_1(L_X)_{\tor}=I(S)^{\perp}$, where 
$I(S)$ is $q$-isotropic for $q: H_1(L_X)_{\tor}\to \qq/\zz$ the quadratic function. 

Let us fix the isolated surface singularity $(X, 0)$.  Then we have invariants $H_1(L_X)$ and the quadratic function $q: H_1(L_X)_{\tor}\to \qq/\zz$. 
For any integer $\mu_+>0$, let 
$$\mathscr{S}(\mu_+, H_1(L_X), q):=\{(V, Q, I, i, s)\}/\sim$$
where the $5$-tuples $(V, Q, I, i, s)$ are the following:
\begin{enumerate}
\item $(V,Q)$ is an ordinary lattice whose maximal positive definite subspaces have dimension $\mu_+$;
\item  $s$ is a sign structure on $(V,Q)$;
\item  $I$ is an isotropic  subspace of $H_1(L_X)_{\tor}$;
\item 
$i: V^*/B(V)\to H_1(L_X)/I$ is an injective homomorphism with finite cokernel which induces an isomorphism $\overline{V}^{\#}/\overline{V}\to I^{\perp}/I$ of quadratic groups, where 
$B: V\to V^*$ denotes the adjoint of the bilinear part of $Q$.
\end{enumerate}
The equivalence relation is defined as follows.  Two  $5$-tuples $(V_1, Q_1, I_1, i_1, s_1)$ and $(V_2, Q_2, I_2, i_2, s_2)$ are equivalent if $I_1=I_2$ and there exists an isomorphism
$\Phi: (V_1, Q_1, I_1, i_1, s_1)\to (V_2, Q_2, I_2, i_2, s_2)$ of quadratic lattices with sign structure such that $i_2=\phi\circ i_1$ where $\phi$ is induced by $\Phi$. 
If $\mu_+=0$, we can ignore the sign structure and denote by $\mathscr{S}(0, H_1(L_X), q)$ the set of $4$-tuples $(V, Q, I, i)$ modulo equivalence relation. 
We denote by  $|\MM^{\sm}|$ the set of smoothing components of the normal surface singularity $(X,0)$.
Then \cite{Looijenga-Wahl} showed that there is a natural map
\begin{equation}\label{eqn_map_smoothing_component}
\Pi: |\MM^{\sm}|\longrightarrow  \mathscr{S}(\mu_+, H_1(L_X), q)
\end{equation}
which sends a smoothing component $\MM^{1}\subset \MM^{\sm}$ to the data $(H_2(M), Q_M, I, i, s)$ constructed from $f: (\XX,0)\to (\Delta,0)$.

\subsection{Equivariant smoothing}\label{subsec_equivariant_smoothing_intro}

In this paper we are interested in the smoothing of $(X,0)$ which are induced by  equivariant  smoothing of locally complete intersection ($\lci$) surface singularities. 
Let $f: (\XX,0)\to (\Delta,0)$ be a smoothing of the surface singularity $(X,0)$.   From \cite[Proposition 5.2]{Looijenga-Wahl}, if there is a finite unramified covering 
$$\pi: \widetilde{M}\to M$$
for the Milnor fiber $M$, then it can be extended to a finite morphism 
$\pi: \YY\to \XX$ unramified over $\XX\setminus \{0\}$ and $\YY$ is a normal Stein space with a partial boundary.  Also 
$\pi^{-1}(0)=\{0\}$ and $(Y,0):=(\pi^{-1}(X),0)$ is a surface singularity. The composition map 
$f\circ \pi: (\YY,0)\to (\Delta,0)$ is a smoothing of $(Y,0)$ with $\widetilde{M}$ its corresponding Milnor fiber. 
Let $G$ be the  finite transformation group of the unramified  covering $\widetilde{M}\to M$.  Then $G$ acts on $\YY$ only fixing $\{0\}$, and thus 
$\widetilde{f}=f\circ\pi: (\YY,0)\to (\Delta,0)$ is a $G$-equivariant smoothing of $(Y,0)$ with quotient the smoothing $f: (\XX,0)\to (\Delta,0)$. 

We are interested in the case that $(Y,0)$ is an $\lci$ surface singularity, so that $(\YY,0)$ is also an $\lci$ threefold singularity.   
From \cite{Le}, \cite{Hamm},  its link $L_{\YY}$ is simply connected, which implies that 
$\pi_1(\widetilde{M})=0$. This implies that $\pi_1(M)=G$. 
 In this case there is no 
nontrivial $I\subset  H_1(L_Y)_{\tor}$ in the above construction.  From the construction of the unramified cover 
$$\pi: \widetilde{M}\to M$$
which induces an unramified $G$-cover $\pi: L_Y\to L_X$ on the boundaries.  There are morphisms 
$$\pi^{!}: H_i(M)\to H_i(\widetilde{M})$$
which sends a cycle in $M$ to its preimage in $\widetilde{M}$ (which is  also a cycle) satisfying 
\begin{equation}\label{eqn_conditions}
\pi_*\pi^{!}(x)=d x; \quad  \text{and when~} i=2, \pi^{!}(x)\cdot \pi^{!}(y)=d (x\cdot y)
\end{equation} 
Here $d=|G|$ is the order of $G$.  Similar formula holds for  $\pi: L_Y\to L_X$.
Also in this case we have 
$$DQF(H_2(\widetilde{M}), \widetilde{Q})\cong (H_1(L_Y)_{\tor}, \tilde{q}).$$
We let $\mathscr{S}(\widetilde{\mu}_{+},H_1(L_Y), \tilde{q})$ be the set of all such $4$-tuples $(H_2(\widetilde{M}), \widetilde{Q}, \tilde{i}, \tilde{s})$. 

For any integer $\mu_+>0$, to shorten notations let $A_Y:=H_1(L_Y)$, and  $A_X:=H_1(L_X)$, we set 
$$\mathscr{S}\left((\widetilde{\mu}_{+},A_Y, \tilde{q})\rightarrow(\mu_+, A_X, q)\right):=\{(\widetilde{V}, \widetilde{Q}, \tilde{i}, \tilde{s})\to (V, Q, I, i, s)\}/\sim$$
where 
\begin{enumerate}
\item the $4$-tuples $(\widetilde{V}, \widetilde{Q}, \tilde{i}, \tilde{s})$ is same as the $5$-tuples in \S \ref{subsec_smoothing_component_intro} with only $I=0$;
\item the arrow $(\widetilde{V}, \widetilde{Q}, \tilde{i}, \tilde{s})\to (V, Q, I, i, s)$ means that there is maps 
$\pi^{!}: V\to H_i(\widetilde{V})$, and $\pi_{*}: \widetilde{V}\to V$ satisfying $\pi_*\pi^{!}(x)=d x$,  and  $\pi^{!}(x)\cdot \pi^{!}(y)=d (x\cdot y)$. 
Similar formula holds for $A_Y$ and $A_X$. 
\item all the maps above are compatible with the quadratic forms and the maps $i, \tilde{i}$ and the signs $s, \tilde{s}$.
\end{enumerate}

We denote by  $|\MM_{\lci}^{\sm}|$ the set of smoothing components of the normal surface singularity $(X,0)$ that can be obtained as the equivariant smoothing of 
$\lci$ covers.
We have the first main result:

\begin{thm}\label{thm_smoothing_component_lci_intro}
There is a natural map 
$$\Pi^{\lci}: |\MM_{\lci}^{\sm}|\longrightarrow  \mathscr{S}\left((\widetilde{\mu}_{+},A_Y, \tilde{q})\rightarrow(\mu_+, A_X, q)\right)$$
which sends an equivariant smoothing component $\MM_{\lci}^{1}\subset \MM_{\lci}^{\sm}$ to the data $(H_2(\widetilde{M}), \widetilde{Q}, \tilde{i}, \tilde{s})\to (H_2(M), Q_M, I, i, s)$ constructed above.
\end{thm}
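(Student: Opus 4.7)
The plan is to assemble the target data by applying the Looijenga--Wahl construction (Theorem~\ref{thm_Looijenga-Wahl_intro} and the map \eqref{eqn_map_smoothing_component}) simultaneously to the smoothing $f:(\XX,0)\to(\Delta,0)$ of $(X,0)$ and to the $G$-equivariant smoothing $\widetilde{f}=f\circ\pi:(\YY,0)\to(\Delta,0)$ of the $\lci$ cover $(Y,0)$, and then to show that the evident transfer morphisms coming from the unramified cover $\pi:\widetilde{M}\to M$ supply the arrow in $\mathscr{S}\bigl((\widetilde{\mu}_{+},A_Y,\tilde q)\to(\mu_+,A_X,q)\bigr)$.

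First I would fix an equivariant smoothing component $\MM_{\lci}^{1}\subset \MM_{\lci}^{\sm}$ and a representative point $s\in \MM^{1}_{\lci}\setminus D$. Applying \eqref{eqn_map_smoothing_component} to $f$ yields the $5$-tuple $(H_{2}(M),Q_{M},I,i,s)$, where $I=\Im(\partial_{\tor}:H_{2}(M,L_{X})_{\tor}\to H_{1}(L_{X})_{\tor})$. Applying the same construction to $\widetilde{f}$ produces the tuple $(H_{2}(\widetilde{M}),\widetilde{Q},\tilde{i},\tilde{s})$; because $(\YY,0)$ is an $\lci$ threefold singularity, the Lê--Hamm theorem gives $\pi_{1}(L_{\YY})=0$, hence $\pi_{1}(\widetilde{M})=0$, so $H_{1}(\widetilde{M})=0$ and the would-be isotropic subgroup vanishes, confirming that $(H_{2}(\widetilde{M}),\widetilde{Q},\tilde{i},\tilde{s})$ has the form required by $\mathscr{S}(\widetilde{\mu}_{+},A_{Y},\tilde q)$.

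Next I would construct the arrow. The unramified degree-$d$ cover $\pi:\widetilde{M}\to M$ (and its boundary cover $\pi:L_{Y}\to L_{X}$) comes equipped with the usual topological transfer $\pi^{!}$ and pushforward $\pi_{\ast}$ on singular homology; these satisfy the identities in \eqref{eqn_conditions} by standard covering-space theory (the preimage cycle has degree $d$, and on a degree-$d$ unramified cover the intersection form pulls back with factor $d$). The maps $\pi_{\ast},\pi^{!}$ are tautologically compatible with the Mayer--Vietoris style exact sequences of the pairs $(M,L_{X})$ and $(\widetilde{M},L_{Y})$, hence with the boundary maps $\partial$ and $\partial_{s}$, so they intertwine the construction of $I$ with the vanishing of $\widetilde{I}$. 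Compatibility of the maps $i,\tilde{i}$ with $\pi_{\ast}$ and $\pi^{!}$ is then automatic from naturality of the identifications $\overline{V}^{\#}/\overline{V}\cong I^{\perp}/I$ under Theorem~\ref{thm_Looijenga-Wahl_intro}, and the sign structures $s,\tilde{s}$ are pulled back through $\pi^{!}$ since $\pi^{!}$ multiplies the intersection form by the positive factor $d$ and therefore takes maximal positive-definite subspaces to maximal positive-definite subspaces.

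The main obstacle I expect is the compatibility of the Looijenga--Wahl \emph{quadratic refinements} $Q_{M},\widetilde{Q}$ (and their boundary counterparts $q,\tilde{q}$) with the transfer maps, as opposed to the bilinear forms, which behave in the naive way by \eqref{eqn_conditions}. Here one must revisit the construction of $Q_{M}$ recalled in \S\ref{sec_quadratic_function}: the quadratic refinement is built from a relative Chern/Stiefel--Whitney type class on $M$, and to propagate it up the cover one uses that $\pi$ is unramified of degree $d$, so the relevant characteristic class pulls back faithfully and $\widetilde{Q}\circ \pi^{!}=d\cdot Q_{M}$, with the analogous statement on $(L_{X},L_{Y})_{\tor}$ for $q,\tilde q$. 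Once this scaling is verified, the diagram of Abelian groups carrying quadratic data matches the definition of the equivalence class in $\mathscr{S}\bigl((\widetilde{\mu}_{+},A_{Y},\tilde q)\to(\mu_+,A_{X},q)\bigr)$.

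Finally I would establish independence of choices. Different points $s\in \MM^{1}_{\lci}\setminus D$ are connected by paths in the (locally trivial) fibration $f^{-1}(\MM_{\lci}^{1}\setminus D)\to \MM_{\lci}^{1}\setminus D$, and parallel transport gives a canonical isomorphism of all data above, and similarly for the equivariant family $\widetilde{f}$; this shows the assignment descends to smoothing components, as already in \cite{Looijenga-Wahl} for the non-equivariant case. The output is thus a well-defined element of $\mathscr{S}\bigl((\widetilde{\mu}_{+},A_{Y},\tilde q)\to(\mu_+,A_{X},q)\bigr)$, yielding the desired natural map $\Pi^{\lci}$.
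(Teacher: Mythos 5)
Your proposal is correct and follows essentially the same route as the paper: apply the Looijenga--Wahl construction to both $f$ and $\widetilde{f}=f\circ\pi$, use the L\^e--Hamm simple connectivity of $L_{\YY}$ to kill the isotropic subgroup upstairs, and let the transfer maps $\pi^{!},\pi_{\ast}$ satisfying \eqref{eqn_conditions} supply the arrow. The only difference is that you spell out the scaling $\widetilde{Q}\circ\pi^{!}=d\cdot Q_{M}$ and the independence of the choice of base point, which the paper dismisses as routine; your verification is consistent with what the paper intends.
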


We call any smoothing component in  $|\MM_{\lci}^{\sm}|$ the $\lci$ smoothing component.
If $(\Delta,0)\subset \MM_{\lci}^{\sm}$, we call the corresponding  smoothing $\widetilde{f}: (\YY,0)\to (\Delta,0)$ the $\lci$ smoothing lifting of the smoothing $f: \XX\to \Delta$ for $(X,0)$.

\subsection{Applications-simple elliptic singularities}\label{subsec_slc_intro}

The main result Theorem \ref{thm_smoothing_component_lci_intro} has applications to special type of surface singularities. 

In the KSBA compactification of moduli space of general type surfaces in \cite{Kollar-Shepherd-Barron},  slc surfaces are added to the boundary of the moduli space.  The slc surface singularities were classified in 
\cite[Theorem 4.24]{Kollar-Shepherd-Barron}, \cite[Theorem 9.13]{Kawamata}. 
Let $X$ be a slc surface.   Except $\lci$-singularities, quotient surface singularities,  the important isolated normal  slc surface singularities are log canonical surface singularities. 
According to the classification,  there are two classes of such isolated singularities depending on the local index. 
\begin{enumerate}
\item  simple elliptic singularities and cusps, these are Gorenstein singularities which have local index one;
\item  the $\zz_2, \zz_3, \zz_4, \zz_6$-quotients of  simple elliptic singularities  and the $\zz_2$-quotient of cusp singularities. These are $\qq$-Gorenstein singularities which have local index $r$ for $r=2,3,4,6$. 
\end{enumerate}

Recall that a simple elliptic singularity $(X,0)$ is an isolated singularity such that the exceptional curve of its minimal resolution $(\widetilde{X}, E)$ is a smooth elliptic curve.   The local embedded dimension of the singularity is given by 
$\max(3, -E\cdot E)$.   It is known from \cite{Laufer}, that the simple elliptic singularity $(X,0)$ is an $\lci$ singularity if the negative self-intersection $-E\cdot E\le 4$.  If $-E\cdot E\ge 5$, then $(X,0)$ is never $\lci$. 
Let $d:=-E\cdot E$ be the degree of $(X,0)$.  From \cite{Pinkham}, a simple elliptic singularity $(X,0)$ admits a smoothing if and only if $1\leq d\leq 9$. 

We have the following result:
\begin{thm}\label{thm_elliptic_singularity_intro}
Let $(X,0)$ be a simple elliptic surface singularity,  and $(\widetilde{X}, E)$ its minimal resolution. Then $(X,0)$ admits an $\lci$ smoothing lifting only when 
$1\le d\le 9$ and $d\neq 5, 6, 7$.
\end{thm}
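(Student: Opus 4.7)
The plan is a case analysis on $d = -E \cdot E$. For $1 \le d \le 4$ the singularity $(X,0)$ is itself $\lci$ by the Laufer classification recalled in \S\ref{subsec_slc_intro}, so the trivial cover $(Y,0) = (X,0)$ with $G = \{1\}$ exhibits any smoothing as its own $\lci$ lifting. For $d \ge 10$ Pinkham rules out any smoothing, hence any $\lci$ lifting. The substantive range is therefore $5 \le d \le 9$, where I will use Pinkham's identification of the Milnor fiber $M$ of any smoothing of $(X,0)$ with the complement $D \setminus E$ of a smooth anticanonical elliptic curve $E$ in a smooth del Pezzo surface $D$ of degree $d$.

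Suppose an $\lci$ lifting $\widetilde f \colon (\YY,0) \to (\Delta,0)$ exists with Galois group $G$. The Le-Hamm simply-connectedness of the link $L_\YY$ of the $\lci$ threefold $(\YY,0)$, recalled in \S\ref{subsec_equivariant_smoothing_intro}, forces $\pi_1(\widetilde M) = 0$, so $\widetilde M \to M$ is the universal cover and $G = \pi_1(M)$. Conversely, given a finite $\pi_1(M)$ whose universal cover $\widetilde M$ is the Milnor fiber of a smoothing of an $\lci$ surface singularity, the extension statement of Looijenga-Wahl Proposition 5.2 promotes the covering to a $G$-cover $\YY \to \XX$ whose central fiber $(Y,0)$ is the required $\lci$ cover of $(X,0)$. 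Hence the existence question reduces to computing $\pi_1(D \setminus E)$ and identifying the universal cover in each case.

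For a smooth anticanonical divisor $E$ in a smooth simply connected projective surface $D$, the fundamental group $\pi_1(D \setminus E)$ is cyclic of order equal to the divisibility of $[E] = -K_D$ in $H^2(D,\zz)$, by a standard Thom-Gysin plus meridian argument. Running through the classification of smooth del Pezzos of degree $d$ in the range $5 \le d \le 9$: for $d = 9$, $D = \pp^2$ and $-K = 3H$ has divisibility $3$, so $\pi_1(M) \cong \zz/3$; for $d = 8$ with $D = \pp^1 \times \pp^1$, $-K = 2h_1 + 2h_2$ has divisibility $2$, so $\pi_1(M) \cong \zz/2$; in every remaining case ($d = 8$ with $D$ the Hirzebruch surface $\ff_1$, and $5 \le d \le 7$) the surface $D$ contains a $(-1)$-curve $\ell$ with $-K_D \cdot \ell = 1$, so $-K_D$ is primitive and $\pi_1(M) = 0$. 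For $d \in \{5, 6, 7\}$ the vanishing of $\pi_1(M)$ rules out any nontrivial cover, and since $(X,0)$ is not $\lci$ no $\lci$ lifting exists. For $d = 9$ I realize the universal cover as the triple cyclic cover of $\pp^2$ branched along the smooth cubic, which is a smooth cubic surface $S$ with $\widetilde E = \pi^{-1}(E)$ anticanonical, and $\widetilde M = S \setminus \widetilde E$ is the Milnor fiber of the $\lci$ degree-$3$ simple elliptic singularity $\widetilde E_6$. Similarly for $d = 8$, the double cover of $\pp^1 \times \pp^1$ branched along the smooth $(2,2)$-curve is a degree-$4$ del Pezzo (a complete intersection of two quadrics in $\pp^4$), giving an $\lci$ lifting by the degree-$4$ simple elliptic singularity.

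The step I expect to require the most care is verifying that the link-level cover genuinely extends to an equivariant cover of the entire smoothing family, with the total space $(\YY,0)$ being an $\lci$ threefold singularity and not merely its central fibre being $\lci$. This is precisely the content of Looijenga-Wahl Proposition 5.2 combined with the observation that in the two nontrivial cases the central fibre $(Y,0)$ is a simple elliptic singularity of degree at most $4$, whose deformation theory is unobstructed and $\lci$, so the equivariant smoothing produces an $\lci$ total space automatically.
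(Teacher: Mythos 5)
Your argument is correct, but it reaches the classification by a genuinely different route from the paper. The paper works inside the Looijenga--Wahl lattice formalism: it computes $H_1(L_X)=\zz^2\oplus\zz_d$ with quadratic function $q(e)=\tfrac{d-1}{2d}$ on the torsion, and uses $q$-isotropy together with the permissibility criterion (\ref{eqn_impermissible_formula2}) --- itself a consequence of the Euler-characteristic identity (\ref{eqn_smoothing_formula}) --- to show that the only nontrivial permissible $I\subset\zz_d$ for $5\le d\le 9$ are $I=\zz_2$ at $d=8$ and $I=\zz_3$ at $d=9$; Proposition \ref{prop_cover_Milnor_fibre} then produces the equivariant smoothing of the degree-$4$, resp.\ degree-$3$, cover. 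You instead invoke Pinkham's description of every Milnor fibre as $D\setminus E$ for a del Pezzo surface $D$ with smooth anticanonical curve $E$, compute $\pi_1(M)$ as the divisibility of $-K_D$, and exhibit the covers explicitly as cyclic branched covers (the triple cover of $\pp^2$ along $E$ is the cubic surface, the double cover of $\pp^1\times\pp^1$ is the degree-$4$ del Pezzo). The two computations necessarily agree, but yours is more concrete and immediately separates, at $d=8$, the smoothing component with Milnor fibre $(\pp^1\times\pp^1)\setminus E$ (which carries the $\lci$ lifting) from the one with Milnor fibre $F_1\setminus E$ (which does not) --- a distinction the paper only draws in its later discussion of smoothing components --- whereas the paper's lattice argument transfers essentially verbatim to cusps, where no uniform geometric model of all Milnor fibres is available. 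The one step you should shore up is the claim that $\pi_1(D\setminus E)$ is \emph{cyclic}: van Kampen gives only normal generation by a single meridian, and Thom--Gysin only computes $H_1$, so abelianness needs an extra input (for instance Nori's weak Lefschetz theorem, applicable here since $E^2=d>2g(E)-2=0$, or the determination of $\pi_1$ of these Milnor fibres in Looijenga--Wahl, which the paper itself quotes).
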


Let $S$ be a smooth del Pezzo surface  of degree $d$ with $1\le d\leq 9$.  Then the affine cone 
$$C(S, -K_S)=\spec \oplus_{i=0}^{\infty}H^0(S, i(-K_S))$$
has the origin as  a threefold isolated singularity.  It is a smoothing of a simple elliptic singularity $(X,0)$ of degree $d$.  Therefore, there is a map 
$f: (C(S, -K_S),0)\to (\Delta,0)$ such that $(f^{-1}(0),0)\cong (X,0)$ is a cone over an elliptic curve.  Theorem \ref{thm_elliptic_singularity_intro} implies that 
only the cones $C(S, -K_S)$ of degree $d=8$ (resp. $d=9$) admits a nontrivial $\lci$ $\zz_2$-covering (resp. $\zz_3$-covering) $\widetilde{C(S, -K_S)}\to C(S, -K_S)$ 
such that $\widetilde{C(S, -K_S)}$ is a cone over a degree $d=4$ (resp. $d=3$) del Pezzo surface.

\subsection{Cusp singularities}

A cusp singularity $(X,0)$ is an isolated normal surface singularity such that the exceptional curve of its minimal resolution $(\widetilde{X}, E)$ is a circle of rational curves or a rational nodal curve.  Let 
$$E=E_1+\cdots+E_r\in |-K_{\widetilde{X}}|.$$
The analytic germ $(X,0)$ of the cusp singularity is uniquely determined by the self-intersections $E_i^2$ of $E$. 
When $n\ge 3$, we assume that $E_i\cdot E_{i\pm 1}=1$.  If $r=2$, $E$ is the union of two smooth rational curves that meet transversely at two distinct points. 
Since $E$ is contractible, Artin's criterion for contractibility implies that the intersection matrix $[E_i\cdot E_j]$ is negative-definite. 
We let $\mathbf{d}=[-d_1, \cdots, -d_r]$ be the self-intersection sequence. 

The cusp singularities exist as pairs which are the only two singular points of an Inoue surface, see \cite{Inoue}.  Let $(X,0)$ be a cusp singularity with resolution cycle $E$, then there exists a dual cusp 
$(X^\prime,0)$ with resolution cycle $E^\prime$.   Let $\mathbf{d}^\prime=[-d^\prime_1, \cdots, -d^\prime_s]$ be the self-intersection sequence of $(X^\prime, 0)$.  Then 
their relation can be found in \cite{Pinkham}, \cite{Jiang_cusp}.   For the smoothing of cusp singularity $(X,0)$,   Looijenga \cite{Looijenga2} conjectured that a cusp singularity 
$(X,0)$ is smoothable if and only if for its dual cusp $(X^\prime, 0)$, the resolution cycle $E^\prime$ is an anti-canonical divisor of a smooth rational surface $Z$.   We call $(Z,E^\prime)$ a Looijenga pair. 
Looijenga \cite{Looijenga2} proved the necessary condition.  The sufficient condition was proved in \cite{GHK15} using log Gromov-Witten theory.   Engel \cite{Engel} gave a birational geometry proof using type III degeneration of 
Looijenga pairs.

From \cite[Proposition 4.1]{NW}, a cusp singularity $(X,0)$ admits a  finite $\lci$ cover $\pi: (Y,0)\to (X,0)$ where $(Y,0)$ is an $\lci$ cusp. 
Let $G$ be the transformation group for $\pi: (Y,0)\to (X,0)$. 
Using the  theory of Looijenga-Wahl \cite{Looijenga-Wahl}, we  give a criterion of the $\lci$ smoothing lifting  for a cusp singularity.  From \cite{Laufer}, \cite{Wahl}, the local embedded dimension of $(X,0)$ is given by
$$\mbox{emb}\dim (X,0)=\max(3, -K_{\widetilde{X}}\cdot K_{\widetilde{X}})$$
where $-K_{\widetilde{X}}\cdot K_{\widetilde{X}}=\sum_{i=1}^{r}(d_i-2)$ if $r\ge 2$ and $-K_{\widetilde{X}}\cdot K_{\widetilde{X}}=d$ if $r=1$. 
Wahl \cite{Wahl} proved that if a cusp singularity $(X,0)$ admits a smoothing, then $\sum_{i=1}^{r}(d_i-3)\le 9$ if $r\ge 2$ and $d\le 10$ if $r=1$.

A result \cite[Theorem 6.6.1]{Looijenga-Wahl} proved the following.  Let $f: (\XX,0)\to (\Delta,0)$ be a smoothing of a cusp singularity $(X,0)$, and let $M$ its Milnor fiber.  Then $\pi_1(M)$ is cyclic and if the local embedded dimension 
of $(X,0)$ is less than $7$, then $\pi_1(M)$ is trivial; and if local embedded dimension 
of $(X,0)$ is  $8$, then $\pi_1(M)$ is trivial or $\zz_2$.

\begin{thm}\label{thm_cusp_singularity_intro}
Let $(X,0)$ be a cusp surface singularity,  and $(\widetilde{X}, E)$ be  its minimal resolution.  Let $f: (\XX,0)\to (\Delta,0)$ be a smoothing of $(X,0)$, and let $G=\pi_1(M)$ be the fundamental group of the Milnor fiber $M$.   Assume that there exists  a $G$-cover $(Y,0)\to (X,0)$ of  $(X,0)$ which  is $\lci$, 
then $(X,0)$ admits an $\lci$ smoothing lifting.
\end{thm}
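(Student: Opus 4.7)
The plan is to construct the desired lifting directly, by applying the extension result of Looijenga--Wahl to the universal cover of the Milnor fibre and then identifying the resulting surface germ with the hypothesized $\lci$ cover $(Y,0)$.

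First I would set $G=\pi_1(M)$ and consider the universal covering $\widetilde{M}\to M$, which is a finite unramified Galois covering with group $G$. By \cite[Proposition~5.2]{Looijenga-Wahl} (recalled in \S\ref{subsec_equivariant_smoothing_intro}), this covering extends to a finite morphism $\pi\colon(\YY',0)\to(\XX,0)$, unramified over $\XX\setminus\{0\}$, with $\pi^{-1}(0)=\{0\}$ and $\YY'$ a normal Stein space with partial boundary; the deck $G$-action lifts so that $\widetilde{f}:=f\circ\pi\colon(\YY',0)\to(\Delta,0)$ is a $G$-equivariant smoothing of the surface germ $(Y',0):=(\pi^{-1}(X),0)$ whose Milnor fibre is the simply-connected space $\widetilde{M}$.

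Next I would identify $(Y',0)$ with the hypothesized $\lci$ cover $(Y,0)$. Connected finite $G$-coverings of $(X,0)$ unramified outside the origin are classified, up to isomorphism, by surjections $\pi_1(L_X)\twoheadrightarrow G$ modulo $\Aut(G)$. The cover $(Y',0)$ corresponds to the canonical composition $\pi_1(L_X)\to\pi_1(M)=G$, whereas $(Y,0)$ corresponds to some surjection $\pi_1(L_X)\twoheadrightarrow G$ determining the $\lci$ cover. By \cite[Theorem~6.6.1]{Looijenga-Wahl}, for a cusp singularity $G=\pi_1(M)$ is cyclic, and in fact trivial when $\mbox{emb}\dim(X,0)\le 7$ and at most $\zz_2$ when $\mbox{emb}\dim(X,0)=8$, so both surjections factor through $H_1(L_X)$. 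Combining the explicit description of $H_1(L_X)$ for cusps with the classification of $\lci$ cusp covers from \cite[Proposition~4.1]{NW}, the two index-$|G|$ subgroups of $H_1(L_X)$ coincide, hence $(Y',0)\cong(Y,0)$ as $G$-covers of $(X,0)$.

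Finally, because $(Y',0)\cong(Y,0)$ is an $\lci$ surface germ and $\widetilde{f}$ is flat over the smooth base $\Delta$, any set of $n-2$ equations cutting $(Y',0)$ out of $(\cc^n,0)$ lifts to $n-2$ equations cutting $(\YY',0)$ out of $(\cc^n\times\Delta,0)$; the total space $(\YY',0)$ is therefore itself an $\lci$ threefold germ, and $\widetilde{f}\colon(\YY',0)\to(\Delta,0)$ is the sought $\lci$ smoothing lifting of $f$. The hard part will be the identification in the middle step: the Looijenga--Wahl extension a priori only produces some $G$-cover, and one must use the cyclicity of $\pi_1(M)$ granted by Theorem~6.6.1 together with the very restricted form of $\lci$ covers of cusps to know that this canonical cover must agree with the given $(Y,0)$.
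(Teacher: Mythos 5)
Your proposal is correct and follows essentially the same route as the paper: extend the universal cover $\widetilde{M}\to M$ of the Milnor fibre to a finite cover $(\YY,0)\to(\XX,0)$ via Looijenga--Wahl's Proposition 5.2, identify the induced surface germ $(\pi^{-1}(X),0)$ with the hypothesized $\lci$ cover $(Y,0)$ using the cyclicity of $\pi_1(M)$ and the classification of covers of the cusp, and conclude that the resulting $G$-equivariant smoothing is the desired lifting. The identification step you flag as ``the hard part'' is exactly what the paper handles, at essentially the same level of detail as your sketch, via the Neumann--Wahl subgroup $H\rtimes\zz\subset\pi_1(L_X)$ cutting out the $\lci$ cusp cover and the commutative diagram comparing the kernels of $\pi_1(L_X)\to\pi_1(M)$ and $H_1(L_X)\to H_1(M)$.
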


The above theorems exactly studied the case $(1)$ for log canonical surface singularities.  For the case $(2)$ with local index bigger than one,  they are all rational singularities. The link  $L$ of such singularities is a rational homology sphere.  Thus, 
$H_1(L)_{\tor}$ is a finite abelian group.  From \cite{NW}, \cite{NW2}, there exist universal abelian covers 
of such singularities with transformation group $H_1(L)_{\tor}$ and  the universal abelian cover is $\lci$.  If the universal abelian cover admits an equivariant  smoothing whose quotient gives a smoothing of such a singularity, 
then  this log canonical singularity admits an $\lci$ smoothing lifting. We have

\begin{thm}\label{thm_cusp-quotients_singularity_intro}(Theorem \ref{thm_cusp-quotients_singularity_intro})
Let $(X,0)$ be a log canonical surface singularity of index $>1$.  Let $f: (\XX,0)\to (\Delta,0)$ be a smoothing of $(X,0)$, and let $G=\pi_1(M)$ be the fundamental group of the Milnor fibre $M$.   Assume that $G=H_1(L_X)$, then $(X,0)$ admits an $\lci$ smoothing lifting.
\end{thm}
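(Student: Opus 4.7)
The plan is to produce the $\lci$ smoothing lifting directly from the universal abelian cover using the extension procedure of \cite[Proposition 5.2]{Looijenga-Wahl}. Since $(X,0)$ is log canonical of index $>1$, it is a rational singularity, so the link $L_X$ is a rational homology $3$-sphere and $H_1(L_X)=H_1(L_X)_{\tor}$ is a finite abelian group. By \cite{NW,NW2} the universal abelian cover $\pi_0:(Y,0)\to (X,0)$ exists, has Galois group $H_1(L_X)$, and $(Y,0)$ is an $\lci$ surface singularity.

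Given the smoothing $f:(\XX,0)\to(\Delta,0)$ with Milnor fibre $M$ satisfying $\pi_1(M)=G=H_1(L_X)$, I would take the universal covering $\widetilde{M}\to M$, which is an unramified Galois $G$-cover, and apply \cite[Proposition 5.2]{Looijenga-Wahl} to extend it to a finite $G$-equivariant morphism $\pi:\YY\to\XX$, unramified on $\XX\setminus\{0\}$, with $\pi^{-1}(0)=\{0\}$ and $\YY$ a normal Stein space with partial boundary. Setting $(Z,0):=(\pi^{-1}(X),0)$, the composition $\widetilde{f}=f\circ\pi:(\YY,0)\to(\Delta,0)$ is a $G$-equivariant smoothing of the normal surface singularity $(Z,0)$ whose quotient recovers $f$.

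To finish I would identify $(Z,0)$ with the universal abelian cover $(Y,0)$. Restricting $\pi$ to boundaries yields an unramified Galois $G$-cover $L_Z\to L_X$ with deck group $G=H_1(L_X)$; since $\pi_1(M)$ is abelian, the inclusion $L_X=\partial M\hookrightarrow M$ factors $\pi_1(L_X)$ through its abelianisation $H_1(L_X)=G$, so $L_Z\to L_X$ is the universal abelian covering of $L_X$. Because a finite normal cover of a normal surface germ unramified outside the origin is determined by its restriction to the link, this gives $(Z,0)\cong (Y,0)$. Hence $(Z,0)$ is $\lci$, and flatness of $\widetilde{f}$ together with $\lci$-ness of the central fibre forces $\YY$ to be an $\lci$ threefold, so $\widetilde{f}$ is a $G$-equivariant $\lci$ smoothing lifting of $f$.

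The main obstacle lies precisely in the identification step of the third paragraph: reconstructing the analytic germ cover $(Z,0)$ from the topological cover $L_Z\to L_X$, and checking that the natural surjection $\pi_1(L_X)\to G$ induced by the inclusion $L_X\hookrightarrow M$ agrees, after composing with the given identification $G=H_1(L_X)$, with the abelianisation map, so that the cover produced by \cite[Proposition 5.2]{Looijenga-Wahl} matches the canonical construction of \cite{NW,NW2}. Once this matching is in place, the conclusion is formal.
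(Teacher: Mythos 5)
Your proposal is correct and follows essentially the same route as the paper: both extend the universal cover $\widetilde{M}\to M$ to $\pi:\YY\to\XX$ via \cite[Proposition 5.2]{Looijenga-Wahl} and then identify the resulting $G$-cover of $(X,0)$ with the universal abelian cover of Neumann--Wahl, which is $\lci$. In fact you spell out the identification step (that $\pi_1(L_X)\to\pi_1(M)=H_1(L_X)$ is forced to be the abelianisation, so the induced link cover is the universal abelian cover) more explicitly than the paper, which compresses this into the assertion that the cover ``must be $\lci$ since $H_1(L_X)=G$.''
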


Note that the condition $\pi_1(M)=H_1(L)$ is, in some sense, too strong,   for example  from \cite{Simonetti}, it is rarely true for $\zz_2$ quotients of cusps.

\subsection{Moduli stack of lci covers}
Let $X$ be a normal slc surface.  From the classification of singularities,  there are only finite set of singular points on $X$ having local index $>1$.   Let $\XX\to X$ be the index one cover DM stack over $X$ defined in \cite{Kollar-Shepherd-Barron}, and reviewed in \cite{Jiang_2022}.  
Then $\XX$ only has $\lci$ singularities, simple elliptic singularities, and cusps.   \cite{Jiang_2021} showed that simple elliptic singularities and cusps with embedded dimension $\ge 6$ have higher obstruction spaces when deforming the slc surfaces. 
Then around such a singularity germ $(X,0)$, there exists an $\lci$ cover $(\widetilde{X},0)\to (X,0)$.  Locally we have a DM stack $[\widetilde{X}/G]$, and we glue all the local constructions to get a 
DM stack $\XX^{\lci}$ over  $X$.   We call it the $\lci$ covering DM stack with coarse moduli space $X$.  The DM stack $\XX^{\lci}$ only has $\lci$ singularities. 

In \cite{Jiang_2022}, the author defined the moduli stack $M^{\lci}$ of $\lci$ cover DM stacks over the moduli stack of slc surfaces.   The moduli stack $M^{\lci}$ is defined as the functor from the category of schemes over $\mathbf{k}$ to the groupoids of isomorphism classes of flat families $\{\XX^{\lci}\to T\}$ of $\lci$ covering DM stacks with the coarse moduli space the $\qq$-Gorenstein families  $\{\sX\to T\}$ of  slc surfaces.   The author  constructed a perfect obstruction theory on the moduli stack $M^{\lci}$. 
Theorem \ref{thm_elliptic_singularity_intro} and Theorem \ref{thm_cusp_singularity_intro} make the construction of the moduli stack of $\lci$ covers in \cite{Jiang_2022} more rigorous.   An advantage to work on the moduli space of $\lci$ covers of slc surfaces is that 
this moduli space admits a virtual fundamental class. 
Theorem  \ref{thm_elliptic_singularity_intro} and Theorem \ref{thm_cusp_singularity_intro} also implies the following corollary:

\begin{cor}\label{cor_Mlci_NPOT}
Let $M=\overline{M}_{K^2, \chi}$ be a KSBA moduli space of slc surfaces with invariants $K_X^2=K^2, \chi(\sO_X)=\chi$.  Suppose that there is a component in $M$ containing slc surfaces $X$ with simple elliptic singularities $(X,0)$ of degrees $6, 7$,  or with cusp singularities $(X,0)$ such that there is no $\lci$ smoothing liftings, then such a component \textbf{cannot} admit a perfect obstruction theory.
\end{cor}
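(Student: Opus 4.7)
The plan is to argue by contradiction. Suppose such a component $M^1 \subset \overline{M}_{K^2,\chi}$ admits a perfect obstruction theory, and let $[X]\in M^1$ be a point corresponding to an slc surface $X$ having either a simple elliptic singularity $(X,0)$ of degree $d\in\{6,7\}$, or a cusp singularity $(X,0)$ with no $\lci$ smoothing lifting. The two ingredients I would feed into the argument are (i) from \cite{Jiang_2021}, the local second-order obstruction space $T^2(X,0)$ at such a singularity (simple elliptic of degree $\ge 5$, or cusp of embedded dimension $\ge 6$) is of dimension strictly larger than what the KSBA expected dimension permits a two-term perfect complex to carry; and (ii) from \cite{Jiang_2022}, the moduli stack $M^{\lci}$ of $\lci$ cover DM stacks does admit a POT, precisely because each index-one cover $\XX^{\lci}$ has only $\lci$ singularities and hence a well-behaved cotangent complex.

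Next I would analyse the forgetful morphism $\Phi: M^{\lci}\to M$ sending $\XX^{\lci}$ to its coarse moduli space. By construction a family of $\lci$ cover stacks over a small disc $\Delta\subset M^1$ centered at $[X]$ produces, via $\Phi$, a smoothing family of $X$; conversely, a smoothing of $X$ comes from such a family over $\Delta$ iff it admits an $\lci$ smoothing lifting in the sense of \S\ref{subsec_equivariant_smoothing_intro}. Here Theorem \ref{thm_elliptic_singularity_intro} and Theorem \ref{thm_cusp_singularity_intro} intervene directly: in the cases excluded by the corollary, no smoothing of $(X,0)$ admits an $\lci$ smoothing lifting, and therefore $\Phi$ fails to be dominant on any neighbourhood of $[X]$ in $M^1$. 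Consequently the POT on $M^{\lci}$ cannot be transported to $M^1$ near $[X]$ by the standard pullback/pushforward mechanism of \cite{Jiang_2022}.

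I would then combine the two strands. Any hypothetical POT on $M^1$ would, pointwise at $[X]$, equip the obstruction sheaf with a surjection onto $T^2(X,0)$ of rank bounded by the POT's second term. Step (i) says this bound is violated: the only route known to compress this excess into a perfect complex of the right amplitude is the $\lci$ cover construction of \cite{Jiang_2022}, which rewrites obstructions in terms of the $G$-equivariant deformation theory of the $\lci$ threefold germ $(\widetilde{X},0)$. But step two showed that in the excluded cases this route is blocked, because the underlying correspondence of smoothings breaks down. Therefore a POT on $M^1$ cannot exist.

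The main obstacle will be to make the last implication rigorous: one must establish that \emph{no} alternative two-term obstruction complex can absorb the excess $T^2(X,0)$ at such an slc singularity, not merely that the specific $\lci$ cover machinery of \cite{Jiang_2022} does not apply. I would handle this by showing that any POT is compatible with the intrinsic cotangent complex of $\qq$-Gorenstein families of slc surfaces, and then appealing to the local computation of \cite{Jiang_2021} to conclude that this intrinsic complex is not of perfect amplitude $[-1,0]$ precisely at the singularities listed in the hypothesis.
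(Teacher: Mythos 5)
There is a genuine gap, and it lies in the mechanism you propose. The paper's proof is essentially a one-step citation: by \cite[Theorem 1.3]{Jiang_2021}, for simple elliptic singularities of degree $6,7$ and for cusps with no $\lci$ smoothing lifting (hence of local embedded dimension $\ge 6$) the \emph{higher} obstruction spaces $\sT^{q}_{(X,0)}$ for $q\ge 3$ do not vanish, and it is this non-vanishing in degrees $\ge 3$ --- i.e.\ the failure of the relevant deformation complex to be of perfect amplitude $[-1,0]$ --- that rules out a perfect obstruction theory. Your ingredient (i) replaces this with the claim that $\dim T^2(X,0)$ exceeds ``what the KSBA expected dimension permits a two-term perfect complex to carry.'' That is not an obstruction at all: in a perfect obstruction theory $E^{-1}\to E^{0}$ the rank of $E^{-1}$ is unconstrained (only the difference of ranks is the virtual dimension), so an arbitrarily large $T^2$ can always be absorbed into the degree $-1$ term; what cannot be absorbed is cohomology of the cotangent complex in degrees $\le -2$, equivalently the non-vanishing of $\sT^{q}$ for $q\ge 3$. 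So the decisive input from \cite{Jiang_2021} is a different statement from the one you invoke.

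Your second strand --- that the POT on $M^{\lci}$ cannot be transported to $M^{1}$ because no smoothing of $(X,0)$ admits an $\lci$ smoothing lifting --- would prove too much if it were the operative criterion. By Theorem \ref{thm_elliptic_singularity_intro} the degree $d=5$ simple elliptic singularity likewise admits no $\lci$ smoothing lifting, yet the corollary deliberately does not exclude $d=5$: the paper's proof points out that for $d=5$ the spaces $\sT^{q}_{(X,0)}$, $q\ge 3$, all vanish, so the non-existence argument does not apply there. Hence ``no $\lci$ smoothing lifting'' cannot by itself imply ``no POT''; the dichotomy is governed by the vanishing of the higher obstruction spaces, which merely happens to coincide with the $\lci$-lifting dichotomy in degrees $6,7$ and for cusps of embedded dimension $\ge 6$. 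Your closing paragraph does gesture at the correct idea --- showing the intrinsic cotangent complex is not of perfect amplitude $[-1,0]$ at these singularities --- but attributes the failure to $T^2$ rather than to $\sT^{q}$ for $q\ge 3$; once that is corrected, the whole argument collapses to the paper's direct appeal to \cite[Theorem 1.3]{Jiang_2021}, and the detour through the forgetful map $M^{\lci}\to M$ becomes unnecessary.
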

\begin{proof}
Theorem  \ref{thm_elliptic_singularity_intro}  implies that a  simple elliptic singularity $(X,0)$ admits a $\lci$ smoothing lifting if and only if 
$d=1, 2, 3, 4, 8, 9$.   For $d=5$, the singularity germ $(X,0)$, although is not $\lci$,  but its higher obstruction spaces $\sT_{(X,0)}^{q} (q\ge 3)$ all vanish, see \cite[Theorem 1.3]{Jiang_2021}.   For the  simple elliptic singularities $(X,0)$ of degrees $6, 7$, and the  cusp singularities $(X,0)$ such that there is no $\lci$ smoothing liftings (which implies that their local embedded dimension $\ge 6$),  from \cite[Theorem 1.3]{Jiang_2021}, their higher obstruction spaces do not vanish.  So there is no perfect obstruction theory on the moduli space. 
\end{proof}

\subsection{Outline}
The paper is outlined as follows.  In \S \ref{sec_quadratic_function} we review the basic knowledge of quadratic functions on finitely generated abelian groups.  
\S \ref{sec_topology_invariants} defines the topological invariants for a smoothing of surface singularity. 
In 
\S \ref{sec_equivariant_smoothing}, we study the equivariant smoothing of surface singularities. 
Finally in 
\S \ref{sec_log_canonical} we apply the equivariant smoothing to simple elliptic singularities, cusp singularities and cyclic quotients of simple elliptic and cusp singularities.

\subsection*{Convention}

We work over $\cc$.  The link of the surface singularity and the Milnor fibre of a smoothing will be real topology spaces over $\rr$. 
In this paper the homology group $H_i(M):=H_i(M,\zz)$ and cohomology  group $H^i(M):=H^i(M,\zz)$ for a topology space $M$ all take integer coefficient.  
Let $N$ be an abelian group, and $Q_N$ a quadratic function on it.  We denote by 
$\overline{N}:=N/ (\rad(Q_N)\oplus N_{\tor})$ and $Q_N$ the induced quadratic function on $\overline{N}$.

\subsection*{Acknowledgments}

Y. J.  thanks Professor J\'anos Koll\'ar for sending examples of  smoothing of simple elliptic singularity (cone over the degree $6, 7$ del Pezzo surfaces) which do not have  lci  smoothing liftings, and Professor Valery Alexeev for discussions on surface singularities.  
Y. J. thanks Sir Simon Donaldson for the valuable discussions of the degeneration of surface singularities at Simons Center Stony Brook in March 2023. 
Y. J.  is partially supported by Simons Collaboration Grant for Mathematicians  and KU Research GO grant at University of Kansas.


\section{Quadratic functions}\label{sec_quadratic_function}

We recall some basic properties of quadratic functions on abelian groups. 
Let $N$ and $H$ be two abelian groups (i.e.,  modules over $\zz$).  A map $q: N\to H$ is called an $H$-valued quadratic function if the map 
$$b: N\times N\to H$$
defined by $b(x,y)=q(x+y)-q(x)-q(y)$ is bilinear.  This implies that  the function 
$k: N\to H$ given by $k(x):=2q(x)-b(x,x)(=4q(x)-q(2x))$ is a homomorphism.  For any $a\in \zz$, 
$$q(ax)=a^2 q(x)-\frac{1}{2}a(a-1)k(x).$$
We call $k$ (resp. $b$) the linear (resp. bilinear) form associated to $q$. 
If $k=0$, $q$ is called an $H$-valued {\em quadratic form} on $N$.
A {\em quadratic morphism} between $q_1: N_1\to H_1$ and $q_2: N_2\to H_2$ is given by the group homomorphisms
$\phi: N_1\to N_2$ and $\psi: H_1\to H_2$ such that $\phi\circ q_2=q_1\circ \psi$. 

Let $b: N\times N\to H$ be a symmetric bilinear map.  Let $b^\prime: N\to H$ be the adjoint homomorphism defined by 
$b^\prime(x)(y)=b(x,y)$.  The kernel $\ker(b^\prime)$ is called the radical $\rad(b)$ of $b$.  We call $b$ {\em non-degenerate} if $\rad(b)=0$, and
{\em nonsingular} if $b^\prime$ is an isomorphism.  In the classical case that $N$ is free abelian with finite rank, $H=\zz$, then nonsingular is called unimodular.  
Also $b$ induces a nondegenerate symmetric bilinear $\overline{b}: \overline{N}\times \overline{N}\to H$ where 
$\overline{N}=N/\rad(b)$. 
Let $q: N\to H$ be a quadratic function with the associated bilinear form $b$.  We call that $q$ is nondegenerate (resp. nonsingular) if $b$ is. 
The quadratic function $q$ is called {\em ordinary} if the restriction $q|_{\rad(b)}=0$.  Thus, such a $q$ factors through a quadratic function 
$\overline{q}: \overline{N}\to H$ with the associated bilinear form $\overline{b}$.  
Suppose that $I\subset N$ is a subgroup on which $q$ vanishes, we call $I$ to be $q$-isotropic, then $b$ vanishes on $I\times I$. 
Let 
$$I^{\perp}:=\{x\in N| b(x,y)=0 \text{~for all ~} y\in I\}.$$
Then $I^{\perp}\subset N$ is a subgroup and $I\subset I$.  The quadratic function $q$ induces a quadratic function 
$q_I: I^{\perp}/I\to H$. If $q$ is nonsingular, so is $q_I$.

From \cite[\S 1.4]{Looijenga-Wahl}, let $N$ be a free abelian group and $B: N\times N\to \zz$ a symmetric bilinear form. We call $(N, B)$ a bilinear lattice. 
An element $k\in N$ is said to be characteristic for $B$ if $B(x,x)+B(x,k)$ is even for all $x\in N$.  In general, let $K: N\to \zz$ be a linear form such that 
$B(x,x)+K(x)$ is even for all $x\in N$, then $Q(x)=\frac{1}{2}(B(x,x)+K(x))$ is a quadratic function having $K$ (resp. $B$) as the associated linear (resp. bilinear) form. 
The pair $(N, Q)$ is called a quadratic lattice. 

Let us recall the {\em discriminant quadratic function} (DQF) of $(N, Q)$.   Since $b$ is nondegenerate, we identify 
$\Hom(N,\zz)$ with 
$$N^{\#}:=\{x\in N_{\qq}| b(x,y)\in \zz \text{~for all~}y\in N\}.$$
The lattice $N$ is identified with its image in $N_{\qq}$, then $N$ is of finite index in $N^{\#}$. 
Actually $|N^{\#}/N|$ is the absolute value of the discriminant of $B$.  Since $B$ and $K$ extend to $N^{\#}$, so does $Q$. 
For any $x\in N^{\#}$, $y\in N$, 
$$Q(x+y)=Q(x)+Q(y)+B(x,y)\equiv Q(x)(\mod \zz),$$
so $Q$ determines a quadratic function 
$q: N^{\#}/N\to \qq/\zz$.  Accordingly, $B$ induces a bilinear form $b: N^{\#}/N\times N^{\#}/N\to \qq/\zz$ associated to $q$. 
We call $(N^{\#}/N, q)$ the  {\em discriminant quadratic function} (DQF)  of $(N,Q)$. In the case $B$ is even for all $x\in N$, and 
$Q(x)=\frac{1}{2}B(x,x)$, then $(N^{\#}/N, q)$ is the discriminant form of the even lattice $(N,B)$; see \cite[\S 1.3]{Nikulin}. 

More properties of quadratic functions and quadratic forms can be found in \cite[\S 1]{Looijenga-Wahl}, \cite{Nikulin}, \cite{Wall} and the references therein. 
For instance,  
from \cite{Nikulin}, any lattice $N_1$ between $N$ and $N_{\qq}$ on which $Q$ is still integral projects to a $q$-isotropic subspace $I:=N_1/N$ of 
$N^{\#}/N$ with $I^{\perp}=N_1^{\#}/N$, and $q_I: I^{\perp}/I\to \qq/\zz$ may be identified with the DQF of $(N_1, Q)$. Conversely, any  $q$-isotropic subgroup
$I$ of $N^{\#}/N$ comes from a lattice $N_1\subset N$ on which $Q$ is integral.
From \cite{Wall}, every quadratic function $q: G\to \qq/\zz$ with $G$ a finite abelian group, is the DQF of a non-degenerate quadratic lattice. 

In the end of this section we review the linking form of a compact closed oriented $(2n-1)$-manifold $L$.   
We use subscript $\tor$ to present the ``torsion part" of the abelian groups. 
There is a canonical $(-1)^n$-symmetric bilinear map
$$l: H_{n-1}(L)_{\tor}\times H_{n-1}(L)_{\tor}\to \qq/\zz$$
called the linking form of $L$. It is characterized by the property that its adjoint is the composition 
$PD_{\tor}: H_{n-1}(L)_{\tor}\stackrel{\cong}{\rightarrow} H^n(L)_{\tor}$ (torsion part of the Poincare duality) and the isomorphism 
$H^n(L)_{\tor}\cong \Hom(H_{n-1}(L)_{\tor}, \qq/\zz)$ by the universal coefficient formula. 
Therefore, $l$ is non-singular.  The geometric definition is as follows.  Let $v_1, v_2\in H_{n-1}(L)_{\tor}$.
We represent $v_1, v_2$ by disjoint $(n-1)$-cycles $V_1, V_2$.  Then some integral multiple $\lambda V_1$ bounds an $n$-chain $C_1$ and we have
$$l(v_1, v_2)\equiv \frac{1}{\lambda}C_1\cdot V_2 (\mod \zz).$$ 

Suppose that $L$ bounds an oriented compact $2n$-manifold $M$, then we have the composite
$$H_n(M)\stackrel{j}{\longrightarrow}H_n(M,\partial M)\stackrel{LD}{\longrightarrow} H^n(M)\rightarrow \Hom(H_n(M), \zz)$$
is the adjoint of a $(-1)^n$-symmetric bilinear form on $H_n(M)$, the intersection product $(\cdot)$, where $LD$ stands for Lefschetz duality which is an isomorphism. 
Let 
$$\overline{H}_n(M)=H_n(M)/(\mbox{radical}+\mbox{torsion}).$$
Then $(\cdot)$ factors through a non-degenerate bilinear form on $\overline{H}_n(M)$.  We denote by 
$(G_M, b_M)$ the associated discriminant bilinear form.   On the torsion part the isomorphisms 
$LD_{\tor}: H_n(M, L)_{\tor}\stackrel{\cong}{\longrightarrow} H^n(M)_{\tor}$, \quad $H^n(M)_{\tor}\stackrel{\cong}{\longrightarrow}\Hom(H_{n-1}(M)_{\tor}, \zz)$ determines a perfect pairing 
$\alpha: H_n(M,L)_{\tor}\times H_{n-1}(M)_{\tor}\to \qq/\zz.$

\begin{prop}\label{prop_lem_2.4_LW}(\cite[Lemma 2.4]{Looijenga-Wahl})
With respect to the pairing $a$ and $l$ the sequence 
$$H_n(M,L)_{\tor}\stackrel{\partial_{\tor}}{\longrightarrow} H_{n-1}(L)_{\tor}\stackrel{i_{\tor}}{\longrightarrow} H_{n-1}(M)_{\tor}$$
is self-$(-1)^n$-dual: 
$$l(\partial_{\tor}(w),v)=(-1)^n \alpha(w, i_{\tor}(v)).$$
Moreover, $\Im(\partial_{\tor})$ is $l$-isotropic, and 
$\Im(\partial_{\tor})^{\perp}=\ker(i_{\tor})$, and the induced $(-1)^n$-symmetric bilinear form $l_M$ on $\Im(\partial_{\tor})^{\perp}/\Im(\partial_{\tor})$ is canonically isomorphic to 
$(G_M, -b_M)$. 
\end{prop}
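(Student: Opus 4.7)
My proof would proceed in three parts matching the structure of the statement.

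\textbf{Duality formula.} The key tool is the commutative diagram
\[
\begin{CD}
H_n(M,L) @>\partial>> H_{n-1}(L) \\
@V{LD}V{\cong}V @V{\cong}V{PD}V \\
H^n(M) @>{i^*}>> H^n(L)
\end{CD}
\]
which commutes up to the sign $(-1)^n$; this is a classical compatibility of the connecting homomorphism of the pair $(M,L)$ with cap product against the fundamental classes of $M$ and of $L$. Restricting to torsion subgroups and using the universal coefficient identification $H^n(-)_{\tor}\cong \Hom(H_{n-1}(-)_{\tor},\qq/\zz)$, the diagram becomes precisely the assertion $l(\partial_{\tor}(w), v) = (-1)^n\,\alpha(w, i_{\tor}(v))$.

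\textbf{Isotropy and perpendicular.} These follow formally from (1). For $w_1, w_2 \in H_n(M,L)_{\tor}$, the formula gives $l(\partial w_1,\partial w_2)=(-1)^n \alpha(w_1,i\partial w_2)=0$ by exactness $i\circ \partial=0$, proving isotropy. The inclusion $\ker i_{\tor} \subset \Im(\partial_{\tor})^{\perp}$ is likewise immediate, while the reverse inclusion uses that $\alpha$ is a perfect pairing of finite abelian groups: $\alpha(w,iv)=0$ for every $w\in H_n(M,L)_{\tor}$ forces $iv=0$.

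\textbf{Identification with $(G_M,-b_M)$.} This is the delicate step. I would construct an explicit isomorphism
$\Phi\colon \Im(\partial_{\tor})^{\perp}/\Im(\partial_{\tor}) \to G_M$
as follows. Given $v \in \ker(i_{\tor})$, pick $w \in H_n(M,L)$ with $\partial w = v$ and an integer $\lambda>0$ with $\lambda v = 0$. Then $\lambda w \in \ker\partial = \Im j$ by exactness, so $\lambda w = j(u)$ for some $u \in H_n(M)$, and I set $\Phi(v) = [\overline{u}/\lambda] \in \overline{H}_n(M)^{\#}/\overline{H}_n(M) = G_M$. The integrality $\overline{u}/\lambda \in \overline{H}_n(M)^{\#}$ is immediate from the identity $B(u,y) = \langle LD(j(u)), y\rangle = \lambda\,\langle LD(w), y\rangle \in \lambda\zz$ for every $y\in H_n(M)$, since $LD\circ j$ is the adjoint of $B$. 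The main obstacle is verifying that $\Phi$ is well-defined on the quotient and bijective; the most delicate points are: (a) independence from the choices of $u$ and $w$ reduces to showing that $\ker j$ is contained in the radical of $B$ modulo torsion, which follows from Lefschetz duality combined with the universal coefficient sequence; (b) triviality on $\Im(\partial_{\tor})$ holds by taking $u=0$; (c) injectivity uses that $\overline{u}\in \lambda\,\overline{H}_n(M)$ forces $w-j(u_0)$ to be torsion in $H_n(M,L)$ for a suitable lift $u_0$; (d) surjectivity uses Lefschetz duality to lift any representative of $\overline{H}_n(M)^{\#}$ through $H^n(M)$ back to $H_n(M,L)$ up to torsion. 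The sign $l_M = -b_M$ is then extracted by a chain-level Stokes' theorem comparison: writing $\lambda_1 V_1 = \partial E$ in $L$ one has $l(v_1,v_2) = \lambda_1^{-1}\, E\cdot V_2$, while $b_M(\Phi(v_1),\Phi(v_2)) \equiv (\lambda_1\lambda_2)^{-1} B(u_1,u_2)\pmod{\zz}$; the two differ precisely by the orientation convention relating $\partial M$ with $M$, which accounts for the minus sign.
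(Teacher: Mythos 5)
The paper does not prove this proposition --- it is quoted verbatim from Looijenga--Wahl (their Lemma 2.4) with only the citation as justification --- so there is nothing internal to compare against; your reconstruction follows the standard argument of that source. Your three steps are all sound: the duality formula from the $(-1)^n$-commutativity of $\partial$ with $i^*$ under Lefschetz/Poincar\'e duality plus universal coefficients, the formal deduction of isotropy and of $\Im(\partial_{\tor})^{\perp}=\ker(i_{\tor})$ from perfectness of $\alpha$, and the explicit map $v\mapsto[\overline{u}/\lambda]$ with $\lambda w=j(u)$, whose well-definedness, injectivity and surjectivity you reduce to the correct facts (in particular $\ker j\subset\rad(B)$ is immediate since the adjoint of the intersection form factors as $LD\circ j$). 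The one place where you assert rather than prove is the final sign $l_M=-b_M$: identifying ``the orientation convention relating $\partial M$ with $M$'' as the source of the minus sign is the right idea, but a complete write-up would carry out the chain-level computation comparing $\tfrac{1}{\lambda_1}C_1\cdot V_2$ in $L$ with $\tfrac{1}{\lambda_1\lambda_2}B(u_1,u_2)$ via the absolute cycles $\lambda_i W_i - C_i$ in $M$; everything else is complete modulo routine checks (e.g.\ that $\Phi$ is additive, by passing to a common denominator).
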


\section{Topological invariants of  smoothing surface singularities}\label{sec_topology_invariants}

\subsection{Surface singularity}

Let $(X,0)$ be an isolated surface singularity.  We always assume that $X$ is a Stein space of dimension two with a $\cc^{\infty}$-boundary $\partial X\subset X$.
The space $X$ is topologically the cone over $\partial X$, and $X$ is irreducible and reduced.  We call $\partial X$ the link of $(X,0)$ and denote it by $L_X$. 
Let $\pi: \widetilde{X}\to X$ be a good resolution.  Then $\widetilde{X}$ is a smooth complex analytic surface (with boundary), $\pi$ is proper and the exceptional curve 
$E=\pi^{-1}(0)$ has only normal crossing singularities whose irreducible components $E_1, \cdots, E_r$ are non-singular.  
We give notations for the resolution. Let 
$$g=\sum_i g(E_i);\quad m=\text{the number of loops in the dual graph of $E$}.$$
The space $\widetilde{X}$ retracts topologically onto $E$, therefore, has homology groups 
$H_0(\widetilde{X}), H_1(\widetilde{X}),H_2(\widetilde{X})$ free of rank $1, 2g+m, r$ respectively. 
Then the boundary of $\widetilde{X}$ is also homoemoephic to the link $L_X$.  For the pair $(\widetilde{X}, L_X)$, we have the homology exact sequence:
\begin{equation}\label{eqn_exact_1}
H_2(L_X)\rightarrow H_2(\widetilde{X})\stackrel{j}{\rightarrow} H_2(\widetilde{X}, L_X)\stackrel{\partial}{\rightarrow} H_1(L_X)
\stackrel{}{\rightarrow} H_1(\widetilde{X})\stackrel{}{\rightarrow} H_1(\widetilde{X}, L_X)\to
\end{equation}
Since $H_2(\widetilde{X})\cong H_2(E)$, $H_2(\widetilde{X}, L_X)\cong H_2(E)^*$, $H_1(\widetilde{X})\cong H_1(E)$
and $H_1(\widetilde{X})\cong H^3(E)=0$, the map $j$ is the adjoint of the intersection product.  The group $H_2(\widetilde{X})$ has a basis consisting of the fundamental classes 
$[E_1], \cdots, [E_r]$.  Let $[E_1]^*, \cdots, [E_r]^*$ be the dual basis, then $j$ maps $[E_\alpha]$
to $\sum_{\beta}(E_{\alpha}\cdot E_{\beta})[E_{\beta}]^*$. Since $[E_\alpha\cdot E_{\beta}]$ is negative definite, $\cok(j)$ is torsion and thus we have the exact sequence:
\begin{equation}\label{eqn_exact_torsion_L}
0\to H_1(L_X)_{\tor}\rightarrow H_1(L_X)\rightarrow H_1(\widetilde{X})\to 0.
\end{equation}

Let $T_{\widetilde{X}}$ be the tangent bundle of $\widetilde{X}$ and let 
$K:=K_{\widetilde{X}}\in  H_2(\widetilde{X})^*$ represent the class $-c_1(T_{\widetilde{X}})\in H^2(\widetilde{X})$. We can calculate $K$ by solving the equations 
$K\cdot E_i+E_i\cdot E_i=2 g(E_i)-2$. 
From \cite{Looijenga-Wahl}, the function 
$Q_{\widetilde{X}}(D)=\frac{1}{2}(D\cdot D+ K\cdot D)$ is a quadratic function and $(H_2(\widetilde{X}), Q_{\widetilde{X}})$ is a quadratic lattice. 
Via Poincar\'e duality, the map $H^2(\widetilde{X})\to H^2(L_X)$ is identified with the differential in (\ref{eqn_exact_1}) which implies that its image consists of torsion elements. 
Thus, the first Chern class of the complex structure on $T_{L_X}\oplus \rr_{L_{X}}$ is torsion. 
\cite[Theorem 3.7]{Looijenga-Wahl} showed that the linking form 
$l$ on $H_1(L_X)_{\tor}$ lifts naturally to a quadratic function.  We take $q$ to be the function induced from $Q_{\widetilde{X}}$,  not $-Q_{\widetilde{X}}$.

\subsection{Smoothing}\label{subsec_surface_smoothing}

Let $f: (\XX,0)\to (\Delta,0)$ be a smoothing of the surface singularity $(X,0)$. Here a smoothing means that $f$ is a proper flat analytic morphism and 
$(f^{-1}(0), 0)\cong (X,0)$.   $\XX$ is a contractible Stein space of dimension $3$ with a partial $\cc^{\infty}$-boundary $\partial \XX\subset \XX$ such that 
$f|_{\mbox{int}(\XX\setminus\{0\})}$ and $f|_{\partial \XX}$ are both submersions. 

Over $\Delta\setminus \{0\}$, $f$ is a fibre bundle whose fibres are $2$-dimensional Stein manifolds with boundary. 
Let $M$ denote a typical non-singular fibre called a ``Milnor fibre" of the smoothing.  Also $f|_{\partial \XX}$ is a $\cc^{\infty}$-fibre bundle over $\Delta$.  
This bundle is trivial, i.e., $\Delta\times L_X\to \Delta$ since $\Delta$ is contractible.   Thus, $\partial M=L_X$.  
The Milnor fibre $M$ has the homotopy type of a finite cell-complex of $\dim \le 2$. 
More details can be found in \cite{Milnor}. 
Thus, we have
$$H^i(M)=H_i(M)=0 \text{~for~}i>2;\quad   H_2(M) \text{~is torsion free}.$$
The symmetric intersection product on $H_2(M)$ can be diagonalized over $\rr$ to decompose $\mu=\rk(H_2(M))$ as 
\begin{equation}\label{eqn_mu}
\mu=\mu_0+\mu_{+}+\mu_{-}.
\end{equation}
Here $\mu_0$ is the rank of the radical.  For the pair $(M, \partial M=L_X)$, we have the following exact sequence
\begin{equation}\label{eqn_diagram_M_L}
\xymatrix{
H_2(M)\ar[r]& H_2(M, \partial M)\ar[r]^{\partial}\ar[d]^{\cong}& H_1(\partial M)\ar[r]^{i}\ar[d]^{\cong}& H_1(M)\ar[r]& H_1(M, \partial M)\ar[d]^{\cong}\\
& H^2(M) & H_1(L_X)& &H^3(M)=0
}
\end{equation}
Then we deduce that 
$$\mu_0=b_1(L_X)-b_1(M)=b_1(\widetilde{X})-b_1(M).$$
The complex structure of the fibres of $f$ determines a complex structure on $T_{\XX/\Delta}|_{\partial \XX}=T_{\partial\XX/\Delta}\oplus \rr_{\partial\XX}$. 
From the contraction $\partial\XX\to \partial \XX_0\to L_X$ which trivializes $f: \partial \XX\to \Delta$, there is a continuous family of complex structures on 
$T_{L_X}\oplus \rr_{L_X}$.  The diffeomorphism $\partial M\to L_X$ respects the homotopy classes of the complex structures on $T_{\partial M}\oplus \rr_{\partial M}$ and $T_{L_X}\oplus \rr_{L_X}$;
thus, respects their Chern classes. 
Let $K_M\in \Hom(H_2(M), \zz)$ denote the image of $-c_1(T_{M})$, then 
$$Q_M(x):=\frac{1}{2}(x\cdot x+ \langle K_M, x\rangle)$$
gives $H_2(M)$ the structure of an ordinary quadratic lattice. 
Thus, $Q_M$ induces a quadratic function on $\overline{H}_2(M)$ with DQF $(G_M, q_M)$. 

\begin{thm}\label{thm_Looijenga-Wahl}(\cite[Theorem 4.5]{Looijenga-Wahl})
Suppose that $M$ is the Milnor fibre of a smoothing of an isolated surface singularity $(X,0)$ with link $L_X$. Then the quadratic function $Q_M$ on $H_2(M)$ makes it an ordinary quadratic lattice 
with the associated non-degenerate lattice $(\overline{H}_2(M), Q_M)$.  The corresponding Discriminant Quadratic Form (DQF) is canonically isomorphic to 
$(I^{\perp}/I, q_I)$ where $I\subset H_1(L_X)_{\tor}$ is the $q$-isotropic subgroup $I:=\Im(\partial_{\tor}: H_2(M, L_X)_{\tor}\to H_1(L_X)_{\tor})$.  

Moreover, there exists an exact sequence 
$$0\to H_1(L_X)_{\tor}/I^{\perp}\rightarrow H_1(M)\rightarrow P\to 0$$
where $P$ is a quotient of $H_1(L_X)/H_1(L_X)_{\tor}\cong H_1(\widetilde{X})$. 
\end{thm}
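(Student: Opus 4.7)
The plan is to establish the three assertions in sequence, relying on the long exact sequence of the pair $(M, L_X)$ in (\ref{eqn_diagram_M_L}), on Poincar\'e-Lefschetz duality on $M$, and on Proposition \ref{prop_lem_2.4_LW}. For the first assertion that $Q_M$ is ordinary, I identify the radical $R$ of the intersection pairing on $H_2(M)$ with $\ker(j)$, where $j \colon H_2(M) \to H_2(M, L_X) \cong H^2(M)$ is the adjoint; by exactness $R = \Im(H_2(L_X) \to H_2(M))$. For $r \in R$ one has $r \cdot r = 0$, so vanishing of $Q_M(r)$ reduces to $\langle K_M, r \rangle = 0$. The class $-c_1(T_M)$ admits a canonical lift to $H^2(M, L_X)$: the isomorphism $T_M|_{L_X} \cong T_{L_X} \oplus \rr$, combined with the continuous trivialization of $f$ along the contraction $\partial \XX \to L_X$, trivializes the restriction of $T_M$ to the boundary, and the standard relative Chern class construction produces the lift. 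Pairing with any class in the image of $H_2(L_X)$ then gives zero, so $Q_M$ descends to a non-degenerate ordinary quadratic lattice $(\overline{H}_2(M), \overline{Q}_M)$.

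For the DQF identification, Proposition \ref{prop_lem_2.4_LW} with $n = 2$ produces a canonical isomorphism of bilinear groups between $I^\perp/I$, equipped with the restriction of the linking form $l$, and the discriminant bilinear form $(G_M, -b_M)$ of $(\overline{H}_2(M), \overline{Q}_M)$. I would upgrade this to a quadratic statement in two moves. First, show $I$ is $q$-isotropic: any $v \in I$ equals $\partial_{\tor}(w)$ for some $w \in H_2(M, L_X)_{\tor}$, so $\lambda v$ bounds a relative $2$-chain $C$ in $M$ for some positive integer $\lambda$, and evaluating the quadratic refinement through $C$ yields $q(v) \equiv \tfrac{1}{2\lambda^2}(C \cdot C + \langle K_M, C \rangle) \equiv 0 \pmod{\zz}$ by Step 1. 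Second, check that the induced $q_I$ on $I^\perp/I$ equals $q_M$: given $v \in I^\perp$ with $\lambda v = \partial C$, the class $\tfrac{1}{\lambda} j(C)$ represents the lift of $v$ inside $\overline{H}_2(M)^\#/\overline{H}_2(M) = G_M$, and the explicit formulas for both quadratic values agree modulo $\zz$. The main obstacle lies here, because Proposition \ref{prop_lem_2.4_LW} only controls the bilinear structure, while $q$ on $H_1(L_X)_{\tor}$ is built from the resolution data $(K_{\widetilde{X}}, Q_{\widetilde{X}})$ whereas $q_M$ originates from the smoothing data $(K_M, Q_M)$. The cleanest route is to work on the closed $4$-manifold $V = \widetilde{X} \cup_{L_X} (-M)$: the characteristic classes $K_{\widetilde{X}}$ and $K_M$ both restrict to $-c_1(T_{L_X} \oplus \rr)$ on $L_X$, so they glue to a single characteristic class $K_V \in H^2(V)$, and both quadratic refinements can be read off simultaneously from $(K_V, Q_V)$ evaluated on rational $2$-cycles of $V$ obtained by joining matching bounding chains in $\widetilde{X}$ and $M$.

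For the exact sequence on $H_1(M)$, since $H_1(M, L_X) \cong H^3(M) = 0$ (as $M$ has the homotopy type of a $2$-complex), the map $i \colon H_1(L_X) \to H_1(M)$ in (\ref{eqn_diagram_M_L}) is surjective with kernel $J := \Im(\partial)$. Restricting to torsion and using $\ker(i_{\tor}) = \Im(\partial_{\tor})^\perp = I^\perp$ from Proposition \ref{prop_lem_2.4_LW}, I obtain an injection $H_1(L_X)_{\tor}/I^\perp \hookrightarrow H_1(M)$ whose cokernel is $H_1(L_X)/(J + H_1(L_X)_{\tor})$, manifestly a quotient of the free group $H_1(L_X)/H_1(L_X)_{\tor} \cong H_1(\widetilde{X})$, giving $P$ as required.
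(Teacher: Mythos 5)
The paper does not actually prove this theorem: it is imported verbatim as \cite[Theorem 4.5]{Looijenga-Wahl}, so there is no in-paper argument to compare against. Your reconstruction follows the lines of the original Looijenga--Wahl proof (long exact sequence of $(M,L_X)$, Lefschetz duality, Proposition \ref{prop_lem_2.4_LW}, and the glued closed $4$-manifold $\widetilde{X}\cup_{L_X}(-M)$ to reconcile the two quadratic refinements), and the overall architecture is sound. Two points deserve tightening. First, in your Step 1 the mechanism is slightly off: the paper only asserts that $c_1(T_M|_{\partial M})=c_1(T_{L_X}\oplus\rr_{L_X})$ is a \emph{torsion} class, not zero, so $-c_1(T_M)$ need not admit an integral lift to $H^2(M,L_X)$ as you claim. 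The conclusion survives anyway: for $r=i_*(s)$ with $s\in H_2(L_X)$ one has $\langle K_M,r\rangle=\langle i^*K_M,s\rangle=0$ because a torsion class in $H^2(L_X)$ pairs integrally to zero; and the radical of the intersection form is $j^{-1}(H^2(M)_{\tor})$ rather than $\ker(j)$, but any such $x$ has a multiple in $\Im(H_2(L_X)\to H_2(M))$, so $\langle K_M,x\rangle=0$ follows by torsion-freeness of $\zz$. Second, Step 2 is where the real content lies, as you acknowledge: Proposition \ref{prop_lem_2.4_LW} only matches the bilinear structures (and with a sign, $(G_M,-b_M)$), so the identification of the quadratic functions --- including the sign convention by which $q$ is induced from $Q_{\widetilde{X}}$ rather than $-Q_{\widetilde{X}}$ --- must be carried out explicitly on the glued manifold; your sketch names the right construction but leaves the verification, which is exactly the delicate part of the original proof. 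Step 3 is complete and correct as written.
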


\subsection{Gorenstein surface smoothing}\label{subsec_Gorenstein_smoothing}

A surface singularity $(X,0)$ is {\em Gorenstein} if and only if it is normal and there exists a nowhere zero holomorphic $2$-form $\omega$ on $X\setminus \{0\}$. 

Here is a result of Looijenga-Wahl \cite[Proposition 4.8]{Looijenga-Wahl}:
\begin{prop}
For a Gorenstein surface singularity $(X,0)$, 
\begin{enumerate}
\item $K_{\widetilde{X}}\in H_2(\widetilde{X})$ and $q: H_1(L_X)_{\tor}\to \qq/\zz$ is a quadratic form. 
\item Let $f: (\XX,0)\to (\Delta,0)$ be a smoothing of $(X,0)$ and let $M$ be its Milnor fibre. Then 
$K_M=0$, and $\overline{H}_2(M)$ is an even lattice whose DQF is canonically isomorphic to $(I^{\perp}/I, q_I)$, where 
$I\subset H_1(L_X)_{\tor}$ is a $q$-isotropic subgroup. 
\end{enumerate}
\end{prop}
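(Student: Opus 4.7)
The proof has two parts, which I plan to address in order, each reducing the Gorenstein hypothesis to a concrete vanishing of a linear form.

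For (1), start with the nowhere zero holomorphic $2$-form $\omega$ on $X\setminus\{0\}$ granted by the Gorenstein hypothesis. Pulling back to a good resolution $\pi:\widetilde{X}\to X$, the meromorphic $2$-form $\pi^{*}\omega$ has zero and pole divisor supported on the exceptional curve $E$, so $K_{\widetilde{X}}=-\mathrm{div}(\pi^{*}\omega)=\sum a_{i}E_{i}$ is an integral combination of exceptional components. This exhibits $K_{\widetilde{X}}$ as an actual element of $H_{2}(\widetilde{X})$, rather than merely of the dual $H_{2}(\widetilde{X})^{*}$ via $-c_{1}(T_{\widetilde{X}})$. Consequently, for every $x\in H_{2}(\widetilde{X})^{\#}$ the intersection number $B(K_{\widetilde{X}},x)$ is an integer, so the linear part of $Q_{\widetilde{X}}$ reduces to zero modulo $\zz$ on $H_{2}(\widetilde{X})^{\#}/H_{2}(\widetilde{X})\cong H_{1}(L_{X})_{\tor}$. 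Since this reduction is precisely the linear form associated to $q$, the function $q$ is a quadratic form.

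For (2), I plan to first prove the stronger claim $K_{M}=0$. The form $\omega$ extends to a nowhere zero section of the relative dualizing sheaf $\omega_{\XX/\Delta}$ on $\XX\setminus\{0\}$: the total space $\XX$ is Gorenstein, because Gorenstein-ness is preserved under flat deformation over the smooth disc $\Delta$, so $\omega_{\XX/\Delta}$ is invertible and the trivialization $\omega$ on the central fibre minus $\{0\}$ propagates. Restricting to the Milnor fibre $M\subset\XX\setminus\{0\}$ produces a nowhere zero holomorphic $2$-form on $M$, which trivializes the complex line bundle $K_{M}$ and forces $c_{1}(T_{M})=0$ in $H^{2}(M)$. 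Substituting $K_{M}=0$ into $Q_{M}(x)=\tfrac{1}{2}(x\cdot x+\langle K_{M},x\rangle)$ gives $Q_{M}(x)=\tfrac{1}{2}(x\cdot x)$. Since $Q_{M}$ takes integer values by Theorem~\ref{thm_Looijenga-Wahl}, the intersection pairing on $\overline{H}_{2}(M)$ is even, making it an even lattice. The identification of the DQF with $(I^{\perp}/I,q_{I})$ is then inherited verbatim from Theorem~\ref{thm_Looijenga-Wahl}; part (1) guarantees that $q_{I}$ is a genuine quadratic form.

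The principal technical point is the global extension of the trivializing $2$-form across $\XX\setminus\{0\}$, equivalently the Gorenstein property of the total space $\XX$. This can be handled by invoking the standard fact that a flat morphism with Gorenstein fibres over a Gorenstein base has Gorenstein total space, applied to $f:\XX\to\Delta$ with Gorenstein central fibre $X$ and smooth (hence Gorenstein) base $\Delta$. Once this is granted, the computation of $K_{M}$ and of the lattice structure on $H_{2}(M)$ is a direct unwinding of the definition of $Q_{M}$ together with Theorem~\ref{thm_Looijenga-Wahl}.
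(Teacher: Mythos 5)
Your argument is correct and is essentially the standard one from Looijenga--Wahl (the paper itself only quotes this as \cite[Proposition 4.8]{Looijenga-Wahl} without reproving it): pull back the trivializing $2$-form to see $K_{\widetilde{X}}$ is supported on $E$, so the linear part $B(K_{\widetilde{X}},\cdot)$ is integral on $H_2(\widetilde{X})^{\#}$ and $q$ has no linear term; then use Gorenstein-ness of the total space to propagate the trivialization of $\omega_X$ to nearby fibres, giving $K_M=0$ and hence evenness of $\overline{H}_2(M)$ from the integrality of $Q_M$. Two small points: the sign should be $K_{\widetilde{X}}=+\mathrm{div}(\pi^{*}\omega)$ (the canonical class is the divisor of a meromorphic top form, which is consistent with $K_{\widetilde{X}}$ representing $-c_1(T_{\widetilde{X}})$), and when propagating the trivialization one should note that the nowhere-zero form on $X\setminus\{0\}$ extends to a generator of $\omega_X$ at $0$ (its ratio to a local generator is a function vanishing at most at the point $0$, hence a unit by normality) before lifting it to a section of $\omega_{\XX/\Delta}$ via Stein-ness.
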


Let  $f: (\XX,0)\to (\Delta,0)$ be a smoothing of an isolated surface singularity (not necessarily normal).  We already have the topological fact $\mu_0=b_1(\widetilde{X})-b_1(M)$. 
The {\em genus} $p(X)$ of $X$ is defined as
$$p(X)=h^1(\sO_{\widetilde{X}})-\delta(X)$$
where $\delta(X)=\dim_{\cc}(H^0(\sO_{\overline{X}})/H^0(\sO_X))$ which is the colength of the normalization $\overline{X}\to X$. 
From \cite{Steenbrink}, \cite{Greuel-Steenbrink}, we have
\begin{enumerate}
\item  $\mu_0+\mu_{+}=2 p(X)$.
\item If $X$ is normal, (i.e., $\delta(X)=0$), then $b_1(M)=0$. 
\end{enumerate}

The above results put restrictions on the smoothability of the surface singularity $(X,0)$. Let $(X,0)$ be normal, then we get 
$$\mu_0=b_1(\widetilde{X}); \quad  \mu_{+}=2p(X)-b_1(\widetilde{X}).$$
Note that this implies that $\mu_0, \mu_{+}$ only depend on $X$ and not on the smoothing. 
We see below that $\mu_{-}$ depends on the smoothing of $X$. 

Recall that we have $c_1(T_{M}|_{\partial M})=c_1(T_{\widetilde{X}}|_{L_X})$ which is a torsion class. 
Then from the calculation in  \cite{Looijenga3}, we have
$$\frac{1}{3}(K_M\cdot K_M-2\chi(M))-\sign(M)=\frac{1}{3}(K_{\widetilde{X}}\cdot K_{\widetilde{X}}-2\chi(\widetilde{X}))-\sign(\widetilde{X}).$$
Here $\sign(-)$ is the signature.  We can substitute $\mu_0=b_1(\widetilde{X})-b_1(M)$, $\mu_0+\mu_{+}=2p(X)$ and use the fact that 
$(E_i\cdot E_j)$ is negative definite (meaning $\sign(\widetilde{X})=-r$), we get the following formula
\begin{equation}\label{eqn_smoothing_formula}
K_M\cdot K_M+\chi(M)=K_{\widetilde{X}}\cdot K_{\widetilde{X}}+\chi(\widetilde{X})+12 p(X).
\end{equation}

Now suppose that $X$ is Gorenstein, then $K_M=0$ and 
$b_1(M)=0$.  The left hand side of (\ref{eqn_smoothing_formula}) reduces to $1+b_2(M)=1-\mu_{-}+2p(X)$.  Then we have 
\begin{equation}\label{eqn_mu-}
\mu_{-}=10 p(X)+ K_{\widetilde{X}}\cdot K_{\widetilde{X}}-b_1(\widetilde{X})+b_2(\widetilde{X})
\end{equation}
which was proved in \cite{Steenbrink}.

\section{Equivariant smoothing}\label{sec_equivariant_smoothing}

Let $(X,0)$ be an isolated surface singularity. 
We are interested in the smoothing of $(X,0)$ induced by an equivariant smoothing of an $\lci$-cover $(Y,0)\to (X,0)$. 

\subsection{Cover of smoothings}

Let $f: (\XX,0)\to (\Delta,0)$ be a smoothing of $X$, and we denote by $M$ the Milnor fibre of the smoothing $f$. 
Let us first recall a very important result in \cite{Looijenga-Wahl}.

\begin{prop}\label{prop_cover_Milnor_fibre}(\cite[Proposition 5.2]{Looijenga-Wahl})
Any finite unramified covering $\pi: \widetilde{M}\to M$ extends to a finite morphism $\pi: \YY\to \XX$ which is unramified over 
$\XX\setminus \{0\}$. Also $\YY$ is a normal Stein space with partial boundary, and $(\pi^{-1}(0), 0)$ is a surface singularity and 
$f\circ\pi: (\YY,0)\to (\Delta,0)$ is a smoothing of $(Y,0)$ with $\widetilde{M}$ the Milnor fibre of this smoothing. 
\end{prop}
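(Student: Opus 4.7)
The plan is to build $\YY$ in three stages: extend the given cover across the smooth locus of $\XX$, extend analytically across the isolated singular point, and verify the resulting Stein and smoothing structures.

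\textbf{Stage 1 (Extension to $\XX\setminus\{0\}$).} The restriction $\widetilde{L}_X := \widetilde{M}|_{\partial M}\to L_X$ is a finite unramified covering of the link, and via the product trivialization $\partial\XX\cong \Delta\times L_X$ it extends to $\Delta\times\widetilde{L}_X\to \partial\XX$. Over the interior, $f|_{\XX\setminus X}\colon \XX\setminus X\to \Delta\setminus\{0\}$ is a locally trivial $C^\infty$-bundle with fibre $M$, so parallel transport along simply connected arcs in $\Delta\setminus\{0\}$ propagates $\widetilde{M}\to M$ to a compatible finite unramified cover. After passing (if necessary) to the Galois closure to absorb the monodromy around the puncture, the interior cover glues with the boundary cover across the smooth part of $X\setminus\{0\}$ to produce a finite unramified covering $\pi^\circ\colon\YY^\circ\to\XX\setminus\{0\}$ restricting to $\widetilde{M}\to M$.

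\textbf{Stage 2 (Analytic extension across $0$).} Since $\{0\}\subset\XX$ is an isolated point in a three-dimensional normal Stein space, it is an analytic subset of codimension $3\geq 2$. The coherent $\sO_{\XX\setminus\{0\}}$-algebra $\pi^\circ_\ast\sO_{\YY^\circ}$ therefore extends canonically to a coherent $\sO_\XX$-algebra $\sA$ by Hartogs-type extension across codimension-$\geq 2$ analytic subsets. Setting $\YY:=\mbox{Specan}_\XX\,\sA$ yields a finite analytic morphism $\pi\colon\YY\to\XX$ étale over $\XX\setminus\{0\}$, with $\YY$ normal by the analytic analog of Zariski--Nagata purity.

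\textbf{Stage 3 (Stein structure and smoothing).} A finite map onto a Stein space has Stein source, so $\YY$ is Stein with partial boundary $\partial\YY:=\pi^{-1}(\partial\XX)$. A direct check shows that $\XX$ is smooth away from $0$ (any $p\in \XX\setminus\{0\}$ lies in a smooth fibre of the flat map $f$, so $f$ is smooth at $p$), and since $\pi$ is étale there, $\YY$ is smooth off $\pi^{-1}(0)$. Replacing $\widetilde{M}$ by a connected component if needed makes $\YY$ irreducible, so $\pi^{-1}(0)$ reduces to a single point $0_\YY\in\YY$, the unique singular point of $\YY$. The composition $\widetilde{f}:=f\circ\pi\colon\YY\to\Delta$ is proper and flat, its generic fibre $\pi^{-1}(M)=\widetilde{M}$ is smooth, and its special fibre $\pi^{-1}(X)=:Y$ is an isolated two-dimensional singularity at $0_\YY$. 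Hence $\widetilde{f}$ is a smoothing of $(Y,0)$ with Milnor fibre $\widetilde{M}$.

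The principal obstacle is Stage 1: one must verify that $\widetilde{M}\to M$ is compatible with the smoothing's monodromy over $\Delta\setminus\{0\}$ so that parallel transports patch consistently along loops, and that the resulting interior cover matches the product cover on the partial boundary. Stages 2 and 3 then follow from standard coherent extension across codimension-$\geq 2$ subsets, purity of the branch locus, and basic Stein function theory.
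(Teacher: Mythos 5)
Your Stage 1 contains the essential gap, and you have in fact put your finger on it yourself without closing it. The proposition asserts that the \emph{given} covering $\widetilde{M}\to M$ extends; your parallel-transport construction only works if the subgroup of $\pi_1(M)$ corresponding to $\widetilde{M}$ is invariant under the monodromy of the fibration over $\Delta\setminus\{0\}$, and your proposed remedy --- passing to the Galois closure --- produces a \emph{different} covering, whose extension would no longer restrict to $\widetilde{M}\to M$ over the Milnor fibre. Moreover, even monodromy-invariance over the punctured disc would not by itself give an extension over all of $\XX\setminus\{0\}$, since one must still cover $X\setminus\{0\}$ and glue. The ingredient that makes all of this unnecessary is \cite[Lemma 5.1]{Looijenga-Wahl}: the inclusion $M\hookrightarrow \XX\setminus\{0\}$ induces an \emph{isomorphism} on $\pi_1$. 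Once this is known, ordinary covering space theory extends $\widetilde{M}\to M$ verbatim to an unramified covering $U\to\XX\setminus\{0\}$, with no monodromy condition to check and no modification of the cover. The paper's proof starts from exactly this lemma; your proof cannot be completed without it (or an equivalent statement), and as written it proves a weaker assertion about some related cover rather than the one in the statement. Your Stage 2 (extending $\pi^{\circ}_{*}\sO$ across the codimension~$\geq 2$ point and taking the analytic spectrum) does agree with the paper's construction via $\pi_{*}(\sO_{\YY})=\pi'_{*}(\sO_U)$.

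Two further points in Stage 3 are asserted rather than proved. First, that $\pi^{-1}(0)$ is a single point does not follow from irreducibility of $\YY$ (a finite map on an irreducible space can have several points over a given point); the paper deduces it by lifting a contraction of $\XX$ to $\{0\}$ to a deformation retraction of the connected space $\YY$ onto $\pi^{-1}(0)$, forcing the latter to be connected and hence a point. (Also, ``replacing $\widetilde{M}$ by a connected component'' again changes the covering and is not permitted.) Second, for $(Y,0)$ to be a surface singularity in the sense used here one needs $Y\setminus\{0\}$ to be \emph{connected}, which is not automatic for the preimage of a connected set under a covering; the paper obtains this from the surjectivity of $\pi_1(X\setminus\{0\})\to\pi_1(\XX\setminus\{0\})$. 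Both of these need to be added before your argument is complete.
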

\begin{proof}
This is a very interesting result, and we provide the proof here. 
\cite[Lemma 5.1]{Looijenga-Wahl} showed that $M\subset \XX\setminus\{0\}$ induces an isomorphism on the fundamental group $\pi_1$. 
Thus, $\pi: \widetilde{M}\to M$ extends to an unramified covering $\pi^\prime: U\to \XX\setminus\{0\}$. 
We construct $\pi: \YY\to \XX$ using the direct image sheaf $\pi^\prime_*(\sO_U)$ for $\pi^\prime: U\to \XX\setminus\{0\}\subset \XX$
(by taking $\pi_*(\sO_{\YY})=\pi^\prime_*(\sO_U)$).  Then $\YY$ is a normal contractible space  and a contraction of $\XX$ to $\{0\}$ lifts to a deformation
retraction of $\YY$ onto $\pi^{-1}(0)$, which can only contain a single point $\{0\}$. 
$f\circ\pi: (\YY,0)\to (\Delta,0)$ defines a reduced Cartier divisor $Y\subset \YY$.  Also 
$\partial M\subset \XX\setminus \{0\}$  implies that $X\setminus\{0\}\subset  \XX\setminus \{0\}$. So it induces 
$\pi_1(X\setminus\{0\})\to \pi_1(\XX\setminus\{0\})$ is surjective. Thus, $Y\setminus\{0\}$ is smooth and  connected.   
Therefore, $(Y,0)$ is a surface singularity.
\end{proof}

Let $d=|G|$ be the degree of the unramified cover $\pi: \widetilde{M}\to M$ where $G$ is the finite transformation group.  Then we have two morphisms:
$$\pi^{!}: H_i(M)\to H_i(\widetilde{M}); \quad   \pi_*: H_i(\widetilde{M})\to H_i(M),$$
where $\pi^{!}$ is defined by sending a cycle on $M$ to its pre-image in $\widetilde{M}$. 
These maps satisfy the conditions
$$\pi_{*}\pi^{!}(x)=d x$$
and for $i=2$, $\pi^{!}(x)\cdot \pi^{!}(y)=d (x\cdot y)$.  Below are the inequalities for the covering  $\pi: \widetilde{M}\to M$, see \cite[\S 5.3]{Looijenga-Wahl}.
\begin{enumerate}
\item  $\mu_{i}(\widetilde{M})\ge \mu_i(M) (i=0, +, -)$.
\item $K_{\widetilde{M}}\cdot K_{\widetilde{M}}=d K_{M}\cdot K_M$,  $K_M=0$ if and only if $K_{\widetilde{M}}=0$.
\item $\chi(\widetilde{M})=d\chi(M)$.
\item $p(Y)\ge p(X)$.
\end{enumerate}

We combine the formula (\ref{eqn_smoothing_formula}) and obtain the following formula:

\begin{equation}\label{eqn_cover_formula}
d(K_{\widetilde{X}}\cdot K_{\widetilde{X}}+\chi(\widetilde{X})+12 p(X))=K_{\widetilde{Y}}\cdot K_{\widetilde{Y}}+\chi(\widetilde{Y})+12 p(Y).
\end{equation}

There is another formula involving the resolution  $\widetilde{Y}$ of $Y$.  We have 
\begin{equation}\label{eqn_formula_Y}
p(X)\le p(Y)\le h^1(\sO_{\widetilde{Y}}).
\end{equation}

\subsection{Proof of Theorem \ref{thm_smoothing_component_lci_intro}}

The result of Theorem \ref{thm_smoothing_component_lci_intro} goes through if we write down the equivariant smoothing carefully. 
Suppose that in the cover smoothing 
$$\tilde{f}=f\circ \pi: (\YY,0)\to (\Delta,0),$$
$(Y,0)$ is $\lci$, which means $(\YY,0)$ is also $\lci$. 
Let $d=|G|$ be the order of the transformation group $G$. 
The link $L_{\YY}$ of the threefold singularity is simply connected, which implies that 
$H_1(\widetilde{M})=0$. 
So 
$$H_2(\widetilde{M}, L_{Y})_{\tor}\cong H^2(\widetilde{M})_{\tor}=\Hom(H_1(\widetilde{M})_{\tor},\qq/\zz)=0.$$
Therefore, we have $\Im(\partial_{\tor}: H_2(\widetilde{M}, L_{Y})_{\tor}\to H_1(L_Y)_{\tor})=0$.  Thus, there is no 
nontrivial $I\subset  H_1(L_Y)_{\tor}$.  From the construction of the unramified cover 
$$\pi: \widetilde{M}\to M$$
which induces an unramified $G$-cover $\pi: L_Y\to L_X$ on the boundaries.  Also the morphisms 
$$\pi^{!}: H_i(M)\to H_i(\widetilde{M})$$
which sends a cycle in $M$ to its pre-image in $\widetilde{M}$ (which is  also a cycle) satisfying 
$$
\pi_*\pi^{!}(x)=d x; \quad  \text{and when~} i=2, \pi^{!}(x)\cdot \pi^{!}(y)=d (x\cdot y).
$$
Similar formula holds for  $\pi: L_Y\to L_X$.
It is routine to check that the morphisms $\pi^{!}: H_2(M)\to H_2(\widetilde{M})$ and 
$\pi_*: H_2(\widetilde{M})\to H_2(M)$  (also for $\pi^{!}: H_2(L_X)\to H_2(L_Y)$ and 
$\pi_*: H_1(L_Y)\to H_1(L_X)$) are compatible with the quadratic functions. 

In this case we have 
$$DQF(H_2(\widetilde{M}), \widetilde{Q})\cong (H_1(L_Y)_{\tor}, \tilde{q}).$$
Then we get a  $4$-tuple $(H_2(\widetilde{M}), \widetilde{Q}, \tilde{i}, \tilde{s})$ 
and an element 
$$(H_2(\widetilde{M}), \widetilde{Q}, \tilde{i}, \tilde{s})\to (H_2(M), I, i, s))$$ 
in 
$\mathscr{S}\left((\widetilde{\mu}_{+},A_Y, \tilde{q})\rightarrow(\mu_+, A_X, q)\right)$.

\section{Log canonical surface singularities}\label{sec_log_canonical}

In this section we focus on log canonical surface singularities-simple elliptic singularities, cusps, and their cyclic quotients.  
Simple elliptic singularities and cusp singularities  are both Gorenstein surface singularities, and their cyclic quotients are $\qq$-Gorenstein singularities. 

\subsection{Gorenstein covers}\label{subsec_Gorenstein_cover}

Let $(X,0)$ be a Gorenstein surface singularity.  Then from \S \ref{subsec_surface_smoothing}, and \S \ref{subsec_Gorenstein_smoothing}, 
we assign the smoothing data for a smoothing component of $(X,0)$. 
From (\ref{eqn_diagram_M_L}), 
the data consist of the even Milnor lattice $H_2(M)$ (where $M$ is the Milnor fibre of the smoothing component);  a $q$-isotropic subgroup
$I\subset H_1(L_X)_{\tor}$;  an injection 
$$i: \cok\left(H_2(M)\to H^2(M)\right)\to H_1(L_X)/I$$
which induces an isomorphism 
$\overline{H}_2(M)^{\#}/\overline{H}_2(M)\stackrel{\sim}{\rightarrow} I^{\perp}/I$; and if $\mu_{+}>0$ a sign structure on 
$H_2(M)$, up to equivalence. 
Also $\cok(i)\cong H_1(M)$. Then together with (\ref{eqn_cover_formula}) and 
(\ref{eqn_formula_Y}), these data put conditions on the covering of $M$, which in turn impose restrictions on $I$. 

Now let  $(X,0)$ be a minimally elliptic singularity \cite{Laufer}, which is, by definition, a Gorenstein singularity having genus $1$.  This property can be 
recognized by its resolution graph.  From \cite[Theorem 3.13]{Laufer},
its local embedded dimension 
$$\mbox{emb} \dim (X,0)=\max(3, -K_{\widetilde{X}}\cdot K_{\widetilde{X}})$$
where $\widetilde{X}$ is the minimal resolution.  Also $(X,0)$ is an $\lci$ singularity if and only if 
$d=-K_{\widetilde{X}}\cdot K_{\widetilde{X}}\le 4$.
In the $\lci$ case, $\pi_1(M)$ is trivial. 

\begin{defn}\label{defn_permissible_quotient}
Let $(X,0)$ be a Gorenstein surface singularity and 
$\phi: H_1(L_X)\to G$ a surjective morphism to a finite group. 
The surjection induces a normalized $G$-covering $(Y,0)\to (X,0)$. 
Let $d=|G|$.
The map $\phi$ is called an {\em impermissible quotient} if for each integer $p$ with $p(X)\le p\le p(Y)$, we have 
\begin{equation}\label{eqn_impermissible_formula}
d\left(\chi(\widetilde{X})+K_{\widetilde{X}}\cdot K_{\widetilde{X}}+12p(X)\right)\neq \chi(\widetilde{Y})+K_{\widetilde{Y}}\cdot K_{\widetilde{Y}}+12p(Y).
\end{equation}

The finite quotient $\phi: H_1(L_X)\to G$ is called {\em permissible} if it is not a factor of an impermissible quotient. 
\end{defn}

From (\ref{eqn_cover_formula}), an impermissible covering can not be induced by a $G$-covering of any Milnor fibre. 
Let $(X,0)$ be minimally elliptic (a simple elliptic or cusp singularity), then $p(X)=p(Y)=1$, then (\ref{eqn_impermissible_formula}) reduces to 
\begin{equation}\label{eqn_impermissible_formula2}
d\left(\chi(\widetilde{X})+K_{\widetilde{X}}\cdot K_{\widetilde{X}}+12\right)\neq \chi(\widetilde{Y})+K_{\widetilde{Y}}\cdot K_{\widetilde{Y}}+12.
\end{equation}

From \cite[\S 6.2]{Looijenga-Wahl}, we have the following two properties:
\begin{enumerate}
\item Every nontrivial quotient $H_1(L_X)\to H_1(L_X)/J$ which kills the torsion $H_1(L_X)_{\tor}$ is impermissible. 
\item Every automorphism of $H_1(L_X)$ which is the identity on both the torsion $H_1(L_X)_{\tor}$ and $H_1(L_X)/H_1(L_X)_{\tor}$
is induced by an automorphism of $X$.
\end{enumerate}

For simple elliptic and cusp singularities,  we explain how the smoothing data in \S ?? of $(X,0)$ is related to a 
$G$-cover $(Y,0)\to (X,0)$.  For such surface singularities, we have $\rk(H_1(L_X))>1$. Then $(1)$ implies that if the nontrivial quotient 
$\phi: H_1(L_X)\to H_1(L_X)/J$  is permissible, we must have
$$H_1(L_X)_{\tor}+J=H_1(L_X).$$
If not, $\phi$ is a factor of the impermissible $H_1(L_X)\to H_1(L_X)/(J+H_1(L_X)_{\tor})$.

Let us fix a $q$-isotropic subgroup $I\subset H_1(L_X)_{\tor}$, and let $\mathscr{J}(I)$ denote the collection of subgroups $J\subset H_1(L_X)$
with $H_1(L_X)_{\tor}\cap J=I^{\perp}$ and $H_1(L_X)_{\tor}+J=H_1(L_X)$.  Let $\Gamma$ be the group of automorphisms of 
$H_1(L_X)$ acting trivially on $H_1(L_X)_{\tor}$, and $H_1(L_X)/H_1(L_X)_{\tor}$. 
Every $g\in \Gamma$ is of the form 
$$x\mapsto x+\alpha(x)$$
for some $\alpha\in \Hom(H_1(L_X)/H_1(L_X)_{\tor}, H_1(L_X)_{\tor})$.   Thus, we identify 
$\Gamma$ with $\Hom(H_1(L_X)/H_1(L_X)_{\tor}, H_1(L_X)_{\tor})$.  The subgroup of $\Gamma$ corresponding to 
$\Hom(H_1(L_X)/H_1(L_X)_{\tor}, I^{\perp})$ acts trivially on $\mathscr{J}(I)$, and 
$\Hom(H_1(L_X)/H_1(L_X)_{\tor}, H_1(L_X)_{\tor}/I^{\perp})\cong \Hom(H_1(L_X)/H_1(L_X)_{\tor}, I)$ acts simply transitively on $\mathscr{J}(I)$. 
We have $|\mathscr{J}(I)|=|I|^{\mu_0}$. 

Condition $(2)$ implies that $\mathscr{J}(I)$ is contained in an $\Aut(X)$-orbit. Thus, permissibility only depends on $I$.  
We call $I$ permissible if sone (equivalently, every) quotient $H_1(L_X)\to H_1(L_X)/J$ with $H_1(L_X)_{\tor}\cap J=I^{\perp}$, is permissible.  
The automorphism group $\Aut(X)$ acts on the set of its smoothing components. 
Let $\mathscr{S}_{\perm}(X)\subset \mathscr{S}(\mu_+, H_1(L_X), q)$ be the subset corresponding to permissible $I$'s. 
Then the map
$$\Phi^{\lci}: |\MM^{\sm}_{\lci}|\to \mathscr{S}_{\perm}(X)\subset \mathscr{S}(\mu_+, H_1(L_X), q)$$ 
is clearly $\Aut(X)$-invariant.

\subsection{Proof of Theorem \ref{thm_elliptic_singularity_intro}}

The result is essentially given in \cite[Example 6.4]{Looijenga-Wahl}, although they didn't mention that the liftings are all $\lci$ covers.  We provide the details here. 

Let $(X,0)$ be a simple elliptic surface singularity and $(\widetilde{X}, E)\to (X,0)$ a good resolution. 
From \cite[Theorem 3.13]{Laufer},  $(X,0)$ is an $\lci$ singularity if the negative self-intersection $-E\cdot E\le 4$.  If $-E\cdot E\ge 5$, then $(X,0)$ is never $\lci$. 
Let $d:=-E\cdot E$ be the degree of $(X,0)$.  From \cite{Pinkham}, a simple elliptic singularity $(X,0)$ admits a smoothing if and only if $1\leq d\leq 9$.

We have $\pi_1(L_X)=\zz\rtimes \zz^2$ and $H_1(L_X)=\zz^2\oplus \zz_d$ for $(X,0)$, since $H_2(\widetilde{X})^{\#}/H_2(\widetilde{X})\cong \zz_d$.  
Then the class $\frac{1}{d}[E]$ in $H^2(\widetilde{X})$ generates the cyclic group $\zz_d$, so does its image $e\in H_1(L_X)_{\tor}$.  
The quadratic form
$q$ on $H_1(L_X)_{\tor}$ is given by $q(e)=\frac{d-1}{2d}\in\qq/\zz$.   Also from (\ref{eqn_mu-}), 
$$\mu_{+}=0;\quad  \mu_-=9-d$$
for $1\le d\le 9$.

We already know that $(X,0)$ is an $\lci$ singularity for $1\le d\le 4$.  Therefore, we only need to check $d=5, 6, 7, 8, 9$. 
Recall that a $q$-isotropic subgroup $I\subset H_1(L_X)_{\tor}$ is permissible if the quotient map 
$H_1(L_X)\to H_1(L_X)/J$ for $J\subset H_1(L_X)$ is permissible, where $J\cap H_1(L_X)_{\tor}=I^{\perp}$. 
Now we check that for $d=5, 6, 7, 8, 9$, the only non-trivial permissible $I\subset H_1(L_X)_{\tor}=\zz_d$ are 
$I=\zz_2$ when $d=8$; and $I=\zz_3$ when $d=9$.   
This can be easily checked using the formula 
(\ref{eqn_impermissible_formula2}).  Note that for $d=5, 6, 7$, there is no non-trivial $I\subset \zz_d$ satisfying the formula (\ref{eqn_impermissible_formula2}).

When $d=8$,  $I=\zz_2\subset \zz_8$, which is generated by $4e$, and we check 
$$q(4e)=4^2\cdot \frac{7}{16}=0\in \qq/\zz.$$
Then we have that $H_1(L_X)/J=\zz_4$ and we get a $\zz_2$-covering 
$$\pi: (Y,0)\to (X,0)$$
and $(Y,0)$ is a simple elliptic singularity with degree $4$, and hence $\lci$. 
Since the above $\zz_2$-covering is permissible,  it comes form a $\zz_2$ covering 
$$\pi: \widetilde{M}\to M$$
for the Milnor fibre of $M$ of a smoothing $f: (\XX,0)\to (\Delta,0)$.
From Proposition \ref{prop_cover_Milnor_fibre}, there is a smoothing 
$f\circ\pi: (\YY,0)\to (\Delta,0)$ of the simple elliptic singularity
$(Y,0)$ with Milnor fibre $\widetilde{M}$.  The $\zz_2$ acts on $\YY$ with quotient $\XX$.  Thus the $\zz_2$-equivariant smoothing 
of $(Y,0)$
is the smoothing $f: (\XX,0)\to (\Delta,0)$ of $(X,0)$. 

When $d=9$,  $I=\zz_3$, then we check that $H_1(L_X)/J=\zz_3$ and we get a $\zz_3$-covering 
$$\pi: (Y,0)\to (X,0)$$
and $(Y,0)$ is a simple elliptic singularity with degree $3$, and hence also $\lci$ (actually a hypersurface singularity). 
Similar analysis implies that we have a  $\zz_3$-equivariant smoothing 
of $(Y,0)$ which induces 
 the smoothing $f: (\XX,0)\to (\Delta,0)$ of $(X,0)$. 
 
\subsection{Smoothing components of simple elliptic singularities}
 The smoothing components $|\MM^{\sm}|$ of $(X,0)$ can be given by the set of subgroups $J\subset H_1(L_X)$ with 
 $$J\cap H_1(L_X)_{\tor}=I^{\perp}$$ 
 where $I\subset H_1(L_X)_{\tor}$ is 
 $q$-isotropic and $J+H_1(L_X)_{\tor}=H_1(L_X)$.  
 
 Now let  $d=8$, the $q$ isotropic subgroups of $\zz_8$ is $I=\zz_2$.  We can calculate that $I^{\perp}=\zz_4$.  There are totally 
 $1+2^2=5$ such elements.  From \cite{Pinkham},  each permissible $I\subset H_1(L_X)_{\tor}=\zz_8$ is realized by a smoothing component. 
 
 A smoothing of $(X,0)$ is essentially given by a Del Pezzo surface 
 $Z$ of degree $8$, and a non-singular anti-canonical divisor $D$ on $Z$ such that 
 $(D, N_{D/Z})\cong (E, N_{E/\widetilde{X}})$ as polarized curves (\cite[Proposition A.6]{Looijenga4}).
 The automorphic group $\Aut(X)$ acts on these smoothing components, and the  $\Aut(X)$-orbits are given by the 
 coarse moduli space of the pairs $(Z, D)$ with $D$ isomorphic to $E$.   When $d=8$, this coarse moduli space 
 has two connected components.  From \cite[\S 9]{Pinkham},  the germ coarse moduli space is given by 
 $$\spec(R)=V_1\cup_{V_j}V_2,$$
 where $V_j$ is the $j$-line of the elliptic curve, which represents the simple elliptic  singularities $(X,0)$ which are cones over elliptic curves; 
 $V_1\setminus V_j$ represents the rational surfaces $F_1$ which can be embedded by a linear  system $L$ of cubic with one base point  to $\pp^8$; and 
 $V_2\setminus V_j$ represents the rational surfaces $F_2$ which is the Veronese transform of a quadric $\pp^1\times\pp^1$ in $\pp^3$. 
 The fundamental group of the Milnor fiber is trivial in the first case and $\zz_2$ in the latter.
 We can take $F_1$ and $F_2$ as the Milnor fibres of the smoothings.   Then our $\lci$ smoothing component must be the component 
 $V_2$. 
 
When $d=9$, since for the simple elliptic surface singularity $(X,0)$, the homology of the link is 
$H_1(L_X)=\zz^2\oplus\zz_9$, and $q: H_1(L_X)_{\tor}\to \qq/\zz$ is given by $q(e)=\frac{4}{9}$, 
we have for any $3e\in \zz_9=H_1(L_X)_{\tor}$, $q(3e)=3^2\cdot \frac{4}{9}=0$. 
So $I=\zz_3$ is $q$-isotropic.  We can calculate $I^\perp=\zz_3$ too. 
In this case  if we have a smoothing $f: (\XX,0)\to (X,0)$, we have $\overline{H}_2(M)=0$ and $I^{\perp}/I=0$, so $I\neq 0$. 
There are totally $3^2=9$ subgroups $J\subset H_1(L_X)=\zz^2\oplus\zz_9$ with the properties that 
$J\cap H_1(L_X)_{\tor}=I^{\perp}$, where $I\subset H_1(L_X)_{\tor}$ is   $q$-isotropic and $J+H_1(L_X)_{\tor}=H_1(L_X)$.  Each permissible $I\subset H_1(L_X)_{\tor}=\zz_9$ is realized by a smoothing component, and therefore, there are totally $9$ smoothing components.  

Each $I\cong \zz_3$ gives a $\zz_3$-cover of the smoothing 
$$(\YY,0)\to (\XX,0)$$
such that $(Y,0)\to (X,0)$ is a $\zz_3$-cover of a simple elliptic singularity of degree $3$, which is a hypersurface singularity. 
Any smoothing in the $9$ smoothing components  admits a lci smoothing lifting. 

In the case of $d=9$, the above result matches the result in Koll\'ar \cite[\S 2.26]{Kollar-Book}.
We include the result here.  Let $(X,0)$ be the simple elliptic singularity of degree $d=9$ and let $E$ be the smooth elliptic curve in the minimal resolution. 
A degree $9$ line bundle $L_9$ on $E$ determines an embedding $E\hookrightarrow E_9\subseteq\pp^8$. 
The embedding $E\hookrightarrow \pp^2$ is given by a degree $3$ line bundle $L_3$ on $E$. 
We then embed $\pp^2$ into $\pp^9$ by $\sO_{\pp^2}(3)$.  Now $E$
is mapped to $E_9$ if and only if $L_3^{\otimes 3}\cong L_9$. For a fixed line bundle  $L_9$ on $E$,  since the $3$-torsion of the Picard group of $E$ is $\zz_3^{\oplus 2}$,  there are totally  $9$ choices of $L_3$ that $L_3^{\otimes 3}\cong L_9$. Thus a 
given $E_9\hookrightarrow \pp^8$ is a hyperplane section of a $\pp^2\hookrightarrow \pp^9$ in $9$ different ways. Therefore, the deformation space 
$(X,0)$ has $9$ smoothing components.

\subsection{Proof of Theorem \ref{thm_cusp_singularity_intro}}

Now we let $(X,0)$ be a cusp singularity whose resolution cycle $E$ has self-intersection sequence
$[-d_1, \cdots, -d_r]$.  In this case we have 
$\pi_1(L_X)=\zz^2\rtimes\zz$ and $H_1(L_X)=\zz\oplus H_1(L_X)_{\tor}$  for cusp singularities. 
The monodromy of the link is given by the matrix 
$$
A=
\mat{cc}
0&-1\\
1&d_r
\rix\cdots \mat{cc}
0&-1\\
1&d_1
\rix
=
\mat{cc}
a&b\\
c&d
\rix.
$$
Then we have $H_1(L_X)_{\tor}\cong \zz^2/(A-\id)$. 

Let $f: (\XX,0)\to (\Delta,0)$ be a smoothing of $(X,0)$ with  fibre 
$M$. 
We will search the $G$-covers $(\YY,0)$ of $(\XX,0)$ (i.e., unramified $G$-covers $\widetilde{M}\to M$ for the Milnor fibre $M$), such that 
$(\YY,0)$ is a $G$-equivariant smoothing of the $G$-cover $(Y,0)\to (X,0)$ and $(Y,0)$ is also $\lci$.  
In this case 
$-K_{\widetilde{X}}\cdot K_{\widetilde{X}}=\sum_{i=1}^{r}(d_i-2)$ if $r\ge 2$, and $-K_{\widetilde{X}}\cdot K_{\widetilde{X}}=d$ if $r=1$. 
We have 
$$\mu_0=\mu_{+}=1;  \mu_{-}= 9-\sum_{i=1}^{r}(d_i-3) (r\ge 2) \text{~or~}10-d (r=1).$$
Then \cite{Looijenga-Wahl} studied the possible permissible $I\subset H_1(L_X)_{\tor}$ such that there exists a surjection
$$H_1(L_X)\to G$$
to a finite group $G$.  We have the following result from \cite[Theorem 6.6.1]{Looijenga-Wahl}.

The Milnor fibre $M$ of a smoothing  $f: (\XX,0)\to (\Delta,0)$ satisfies the property that 
$\pi_1(M)$ is a cyclic  group; and 
$\mbox{emb}\dim (X,0)\le 7$ (resp. $=8$) implies that 
$\pi_1(M)$ is trivial (resp. of order $\le 2$). 

Since $\pi_1(L_X)\to \pi_1(M)$ is surjective, we have the following diagram:
\begin{equation}\label{eqn_diagram-cusp}
\xymatrix{
& 0\ar[d] & 0\ar[d] &0\ar[d] &\\
0\ar[r]&K^\prime\ar[r]\ar[d]& K\ar[r]\ar[d]& K^{\prime\prime}\ar[r]\ar[d] &0\\
0\ar[r]&K_{\pi}^\prime\ar[r]\ar[d]& \pi_1(L_X) \ar[r]\ar[d]& \pi_1(M)\ar[r]\ar[d] &0\\
0\ar[r]&K_{H}^\prime\ar[r]\ar[d]& H_1(L_X) \ar[r]\ar[d]& H_1(M)\ar[r]\ar[d] &0\\
& 0 & 0 &0 &\\
}
\end{equation}
where the second row of vertical arrows are abelianization maps. 
Since $\pi_1(M)$ is cyclic,  $\pi_{1}(M)\cong H_1(M)$. 

For the cusp singularity $(X,0)$, we recall the method in \cite[Proposition 4.1]{NW} to construct the $\lci$ cover of $(X,0)$. 
Let $H$ be the subspace of $\zz^2$ generated by $\mat{c}a\\ c\rix$ and $\mat{c}0\\ 1\rix$.    Then the  matrix $A$ takes the subspace $H$ to itself
by the matrix  $\mat{cc}0&-1\\ 1& t\rix$ where $t=\tr(A)=a+d$.
The finite transformation group $G$ is  given as follows:   first we take  the quotient finite group $\pi_1(L_X)/H\rtimes \zz$, the subgroup 
$H\rtimes \zz\subset \pi_1(L_X)$ determines a cover  of $X$. 
This cover determined by $H\rtimes \zz$ is either the cusp with resolution graph consisting of a cycle with one vertex weighted $-t$ or the dual cusp of this, according as the above basis is oriented correctly or not, i.e., whether $a< 0$ or  $a > 0$.
By  taking the discriminant cover if necessary we get the cover $(Y, 0)$ of $X$  with transformation group 
$G$. 

At least for $a\ge 0$,  the finite transformation group $G=\zz_t$ for $t=a+d$.  Then the $G$-cover $(Y,0)$ is a cusp with resolution cycle 
(which is the dual cycle of the cycle $[-t]$)
$$[-3, \underbrace{-2,\cdots, -2}_{t-3}].$$
From \cite{Nakamura}, $(Y,0)$ is a hypersurface cusp singularity (hence $\lci$). 

Now suppose that $\pi_1(M)=\zz_t$,  then we modify the  diagram (\ref{eqn_diagram-cusp}) and get 
\begin{equation}\label{eqn_diagram-cusp2}
\xymatrix{
& 0\ar[d] & 0\ar[d] & &\\
0\ar[r]&H\rtimes\zz\ar[r]\ar[d]& H\rtimes\zz\ar[r]\ar[d]& 0\ar[d] &\\
0\ar[r]&H\rtimes \zz\ar[r]\ar[d]& \pi_1(L_X) \ar[r]\ar[d]& \pi_1(M)=\zz_t\ar[r]\ar[d] &0\\
&0\ar[r]& \zz_t \ar[r]\ar[d]& \zz_t\ar[r]\ar[d] &0\\
&  & 0 &0 &\\
}
\end{equation}
Then the $G=\zz_t$-cover of $(X,0)$ gives the hypersurface cusp 
$(Y,0)$ and the $\zz_t$-cover $\widetilde{M}\to M$. 
Thus, from Proposition \ref{prop_cover_Milnor_fibre},  there is a smoothing 
$$f\circ\pi: (\YY,0)\to (\Delta)$$
of $(Y,0)$ such that $\pi: (\YY,0)\to (\XX,0)$ is a $\zz_t$-cover and the $\zz_t$-action on $\YY$ only fixes the point $0$. 
Therefore, $f\circ\pi: (\YY,0)\to (\Delta)$ is a $G$-equivariant smoothing which induces the smoothing $f: (\XX,0)\to (\Delta,0)$
of $(X,0)$.

\subsection{Quotients of simple elliptic and cusp singularities}\label{sec_quotient_cusp}

Recall from the classification of slc singularities in \cite[Theorem 4.24]{Kollar-Shepherd-Barron}, and recalled in \S \ref{subsec_slc_intro}, the log canonical surface singularities with index $>1$ are $\zz_2, \zz_3, \zz_4,\zz_6$-quotients of simple elliptic singularities and $\zz_2$-quotients of cusps.  Their resolution graphs can be found in \cite[Theorem 9.13]{Kawamata}. These are all rational singularities since their resolution graphs are trees of $\pp^1$'s.  These singularities satisfy the conditions in \cite{NW}, \cite{NW2}. Let $(X,0)$ be such a singularity,  then the link $L_X$ is a rational homology sphere. 
The homology $H_1(L_X)$ is a finite abelian group.  There is a surjective map:
$$\pi_1(L_X)\to H_1(L_X)\to 0.$$
Then we take the finite abelian cover 
$$\pi: (Y,0)\to (X,0)$$
with transformation group $G=H_1(L_X)$ which is called the universal abelian cover. 

From similar proof as in Theorem \ref{thm_cusp_singularity_intro} we have 
\begin{thm}\label{thm_cusp-quotients_singularity}
Let $(X,0)$ be a log canonical surface singularity of index $>1$.  Let $f: (\XX,0)\to (\Delta,0)$ be a smoothing of $(X,0)$, and let $G=\pi_1(M)$ be the fundamental group of the Milnor fibre $M$.   Assume that $G=H_1(L_X)$, then $(X,0)$ admits an $\lci$ smoothing lifting.
\end{thm}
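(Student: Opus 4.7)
The plan is to mirror the argument used in the proof of Theorem~\ref{thm_cusp_singularity_intro}, replacing the cyclic cover obtained from the subgroup $H\rtimes\zz\subset\pi_1(L_X)$ with the universal abelian cover supplied by \cite{NW, NW2}. The hypotheses are tailored so that exactly such a cover fits into the framework of Proposition~\ref{prop_cover_Milnor_fibre}.

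First I would record what the classification in \S\ref{subsec_slc_intro} buys us: $(X,0)$ is a $\zz_2,\zz_3,\zz_4$, or $\zz_6$-quotient of a simple elliptic singularity, or a $\zz_2$-quotient of a cusp. In each of these cases the exceptional divisor of the minimal resolution is a tree of rational curves, so the link $L_X$ is a rational homology sphere and $H_1(L_X)$ is a finite abelian group. By \cite{NW, NW2}, the abelianization $\pi_1(L_X)\twoheadrightarrow H_1(L_X)=G$ determines a finite unramified $G$-cover $\pi\colon(Y,0)\to(X,0)$, the universal abelian cover, and in the present situation $(Y,0)$ is precisely the underlying simple elliptic or cusp singularity, which is Gorenstein and (by \cite{Laufer} together with a check on embedded dimensions as in Theorem~\ref{thm_cusp_singularity_intro}) is $\lci$.

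Next I would use the hypothesis $\pi_1(M)=G=H_1(L_X)$. Since $G$ is abelian, the surjection $\pi_1(L_X)\twoheadrightarrow\pi_1(M)=G$ factors through $H_1(L_X)$, and being a surjection between abelian groups of the same (finite) order it coincides with the abelianization. Consequently the unramified $G$-cover $\widetilde{M}\to M$ classified by $\pi_1(M)\to G$ restricts on the boundary $\partial M\cong L_X$ to the universal abelian cover $L_Y\to L_X$. Applying Proposition~\ref{prop_cover_Milnor_fibre}, the cover $\widetilde{M}\to M$ extends to a finite morphism $\pi\colon(\YY,0)\to(\XX,0)$, unramified outside $\{0\}$, with $\YY$ a normal Stein space and $(\pi^{-1}(0),0)$ a surface singularity; the identification of boundaries and the uniqueness of the cover classified by a given subgroup of $\pi_1(L_X)$ force $(\pi^{-1}(0),0)\cong(Y,0)$. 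The deck group $G$ acts on $\YY$ with fixed locus $\{0\}$, and $f\circ\pi\colon(\YY,0)\to(\Delta,0)$ is a $G$-equivariant smoothing of $(Y,0)$ whose quotient is $f$, exhibiting $f$ as an $\lci$ smoothing lifting in the sense of \S\ref{subsec_equivariant_smoothing_intro}.

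The main obstacle I anticipate is not in producing the cover but in pinning down the identification of its central fibre with $(Y,0)$: one must check that the classifying map $\pi_1(L_X)\to G$ induced by $L_X\hookrightarrow\partial\XX\subset\XX\setminus\{0\}\simeq M$ really matches the abelianization used to define the universal abelian cover. This reduces to the fact, implicit in the proof of Proposition~\ref{prop_cover_Milnor_fibre} via \cite[Lemma~5.1]{Looijenga-Wahl}, that $\pi_1(M)\cong\pi_1(\XX\setminus\{0\})$ and that the inclusion $L_X\hookrightarrow\XX\setminus\{0\}$ is $\pi_1$-surjective; together with $G$ being abelian, this forces the classifying data on both sides to agree, so that the extension produced by Proposition~\ref{prop_cover_Milnor_fibre} is the $\lci$ cover $(Y,0)$ on the nose rather than a twisted variant.
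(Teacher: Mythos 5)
Your argument follows the paper's proof in all essentials: use the hypothesis $\pi_1(M)=H_1(L_X)$ to see that the $G$-cover $\widetilde{M}\to M$ is classified by the abelianization of $\pi_1(L_X)$, extend it to $\pi\colon(\YY,0)\to(\XX,0)$ by Proposition~\ref{prop_cover_Milnor_fibre}, identify the special fibre with the universal abelian cover $(Y,0)$ of $(X,0)$, and conclude that the resulting $G$-equivariant smoothing is an $\lci$ smoothing lifting because $(Y,0)$ is $\lci$. This is exactly the paper's argument (carried out via the analogue of diagram (\ref{eqn_diagram-cusp2})), and your extra care in matching the classifying map on $L_X\hookrightarrow\XX\setminus\{0\}$ with the abelianization is a welcome elaboration of a point the paper leaves implicit.

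One justification in your write-up should be corrected. You assert that the universal abelian cover $(Y,0)$ ``is precisely the underlying simple elliptic or cusp singularity'' and that its $\lci$-ness then follows from Laufer's embedded-dimension criterion. Neither claim holds in general. The universal abelian cover has deck group all of $H_1(L_X)$, which is typically strictly larger than the cyclic group ($\zz_2,\zz_3,\zz_4,\zz_6$) by which one quotiented, so $(Y,0)$ is in general a \emph{further} cover of the original simple elliptic or cusp singularity rather than that singularity itself; and even the underlying Gorenstein singularity need not be $\lci$ (simple elliptic singularities of degree $\ge 5$, and cusps with $-K_{\widetilde{X}}\cdot K_{\widetilde{X}}\ge 5$, are not). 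The fact actually needed --- that the universal abelian cover of these rational log canonical singularities is $\lci$ --- is a nontrivial theorem of Neumann--Wahl \cite{NW}, \cite{NW2}, and that is precisely what the paper invokes. Since you do cite these references at the start of the relevant sentence, this is a flaw of justification rather than of substance: the proof stands once the appeal to \cite{NW}, \cite{NW2} is made the actual reason for $\lci$-ness, in place of the appeal to \cite{Laufer}.
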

\begin{proof}
The proof is similar to Theorem \ref{thm_cusp_singularity_intro}.  Look at the following diagram (similar to Diagram \ref{eqn_diagram-cusp2})
\begin{equation}\label{eqn_diagram-3}
\xymatrix{
& 0\ar[d] & 0\ar[d] & &\\
0\ar[r]&H\rtimes\zz\ar[r]\ar[d]& H\rtimes\zz\ar[r]\ar[d]& 0\ar[d] &\\
0\ar[r]&H\rtimes \zz\ar[r]\ar[d]& \pi_1(L_X) \ar[r]\ar[d]& \pi_1(M)=G\ar[r]\ar[d] &0\\
&0\ar[r]& G \ar[r]\ar[d]& G\ar[r]\ar[d] &0\\
&  & 0 &0 &\\
}
\end{equation}
If we have a smoothing  $f: (\XX,0)\to (\Delta,0)$  of $(X,0)$ such that the Milnor fibre $M$ satisfies $G=\pi_1(M)$, then we can take the $G$-cover of the smoothing to get a smoothing 
$$\tilde{f}: (\YY,0)\to (\Delta,0)$$
such that the singularity $(Y,0)\to (X,0)$ is a $G$-cover.   The cover $(Y,0)$ must be $\lci$ since $H_1(L_X)=G$.   The smoothing $\tilde{f}: (\YY,0)\to (\Delta,0)$ is $G$-equivariant and its quotient gives the smoothing $f: (\XX,0)\to (\Delta,0)$.
\end{proof}




\subsection*{}

\end{document}